\definecolor{darkgreen}{RGB}{10,220,10}
\definecolor{rsred}{RGB}{255,100,255}
\newtheorem{theorem}{Theorem}[]
\newtheorem{claim}{Claim}
\newtheorem{conjecture}[theorem]{Conjecture}
\newtheorem{observation}{Observation}
\numberwithin{equation}{section}
\title{Results on proper conflict-free list coloring of graphs}
\author{
	Masaki Kashima\thanks{Faculty of Science and Technology, Keio University, Yokohama, Japan. Email: masaki.kashima10@gmail.com} \quad  
	Riste \v{S}krekovski\thanks{Faculty of Mathematics and Physics, University of Ljubljana, Faculty of Information Studies in Novo Mesto, and Rudolfovo - Science and Technology Centre Novo Mesto, Slovenia. Email:skrekovski@gmail.com} \quad 
	Rongxing Xu\thanks{School of Mathematical and Science, Zhejiang Normal University, Jinhua, China. Email:xurongxing@zjnu.edu.cn}
}
\begin{document}
\maketitle

\begin{abstract}
	Given a graph $G$ and a mapping $f:V(G) \to \mathbb{N}$, an $f$-list assignment of $G$ is a function that maps each $v \in V(G)$ to a set of at least $f(v)$ colors.  For an $f$-list assignment $L$ of a graph $G$, a proper conflict-free $L$-coloring of $G$ is a proper coloring $\phi$ of $G$ such that $\phi(v) \in L(v)$ for every vertex $v\in V(G)$ and $v$ has a color that appears precisely once at its neighborhood for every non-isolated vertex $v\in V(G)$. We say that $G$ is \emph{proper conflict-free $f$-choosable} if for any $f$-list assignment $L$ of $G$, there exists a proper conflict-free $L$-coloring of $G$. 
  For a non-negative integer $k$, we say that $G$ is \emph{proper conflict-free $({\rm degree}+k)$-choosable} if $G$ is proper conflict-free $f$-choosable where $f$ is a mapping with $f(v)= d_G(v)+k$ for every vertex $v\in V(G)$.
		 
  Motivated by degree-choosability of graphs, we investigate the proper conflict-free $({\rm degree}+k)$-choosability of graphs, especially for cases $k=1,2,3$.
	As the 5-cycle is not proper conflict-free $({\rm degree}+2)$-choosable and it is the only such graph we know, it is possible that every connected graph other than the 5-cycle is proper conflict-free $({\rm degree}+2)$-choosable and thus every graph is proper conflict-free $({\rm degree}+3)$-choosable.
	To support these, we show that every connected graph with maximum degree at most 3 distinct from the 5-cycle is proper conflict-free $(\text{degree}+2)$-choosable, and that $S(G)$ is proper conflict-free $(\text{degree}+2)$-choosable for every graph $G$, where $S(G)$ is a graph obtained from $G$ by subdividing each edge once.
  Furthermore, by adapting the technique of DP-colorings, we prove that every graph with maximum degree at most $4$ is proper conflict-free $({\rm degree}+3)$-choosable. 
\end{abstract}
	
\textbf{Keywords:} proper conflict-free coloring, odd coloring, list coloring, maximum degree, degree choosability
	
\section{Introduction}\label{sec:intro}
Throughout this paper, we only consider simple, finite and undirected graphs.
We refer readers to Bondy and Murty~\cite{Bondy} for all terminologies and notations not defined here.

A $k$-coloring of a graph $G$ is a map $\phi: V(G)\to \{1,2,\ldots,k\}$.
A coloring $\phi$ is called \emph{proper} if every adjacent pair of vertices receive distinct colors.
The chromatic number of a graph $G$, denoted by $\chi(G)$, is the least positive integer $k$ such that $G$ admits a proper $k$-coloring.
	
For a graph $G$, a coloring $\phi$ is called \emph{proper conflict-free} if $\phi$ is a proper coloring of $G$, and for every non-isolated vertex $v\in V(G)$, there exists a color that appears exactly once in the neighborhood of $v$.
The \emph{proper conflict-free chromatic number} of a graph $G$, denoted by $\chi_{\textrm{pcf}}(G)$, is the least positive integer $k$ such that $G$ admits a proper conflict-free $k$-coloring.
The notion of proper conflict-free coloring of a graph was introduced by Fabrici, Lu\v{z}ar, Rindo\v{s}ov\'{a} and Sot\'{a}~\cite{Fabrici}, where they investigated proper conflict-free coloring of planar graphs and outerplanar graphs.
	
Another related concept is odd coloring of graphs, which was first introduced by Petru\v{s}evski and \v{S}krekovski~\cite{Petrusevski}.
A coloring $\phi$ of $G$ is called an \emph{odd coloring} of $G$ if $\phi$ is a proper coloring of $G$, and for every vertex $v\in V(G)$, there exists a color that appears odd times in the neighborhood of $v$.
The \emph{odd chromatic number} of a graph $G$, denoted by $\chi_o(G)$, is the least positive integer $k$ such that $G$ admits an odd $k$-coloring. By definitions, a proper conflict-free coloring of a graph $G$ is an odd coloring of $G$ as well, hence $\chi_o(G)\leq \chi_{\textrm{pcf}}(G)$ for any graph $G$.
	
In proper colorings of graphs, there is a trivial bound of the chromatic number that $\chi(G)\leq \Delta(G)+1$ for any graph $G$.
This bound does not work for proper conflict-free colorings or odd colorings of graphs since $C_5$, the 5-cycle, has the maximum degree 2 and $\chi_{\textrm{pcf}}(C_5)=\chi_o(C_5)=5$.
Caro, Petru\v{s}evski and \v{S}krekovski~\cite{Caro1} conjectured that $\chi_o(G)\leq \Delta(G)+1$ for every connected graph $G$ provided $\Delta(G) \geq 3$, and verified the case $\Delta(G) = 3$. They also conjectured the following stronger form  in \cite{Caro2}.
	
\begin{conjecture}\label{conj:pcf}
	For any connected graph $G$ with $\Delta(G) \geq 3$, $\chi_{\textup{pcf}}(G)\leq \Delta(G)+1$.
\end{conjecture}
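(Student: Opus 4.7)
The plan is to attack Conjecture~\ref{conj:pcf} through a minimum counterexample argument layered on top of a Brooks-type coloring, then perturbed locally to repair conflict-free failures. Suppose $G$ is a minimum counterexample: connected, $\Delta(G)\geq 3$, and $\chi_{\textrm{pcf}}(G)>\Delta(G)+1$. First one disposes of $G=K_{\Delta+1}$ immediately: any proper $(\Delta+1)$-coloring of $K_{\Delta+1}$ uses all $\Delta+1$ colors distinctly, and every vertex sees each of the other $\Delta$ colors exactly once, so the coloring is automatically proper conflict-free. Hence $G\neq K_{\Delta+1}$, and since $\Delta(G)\geq 3$ rules out odd cycles, Brooks' theorem yields a proper $\Delta$-coloring $\phi$, leaving one color $c^{\star}$ in reserve.

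With $\phi$ in hand, I would partition $V(G)$ into \emph{good} vertices (some color appears exactly once in the neighborhood) and \emph{bad} vertices (every color in the neighborhood appears at least twice). The goal is to modify $\phi$ by a sequence of local recolorings---typically pushing a neighbor of a bad vertex onto $c^{\star}$, or swapping along short Kempe-like chains involving $c^{\star}$---so that every vertex becomes good while properness is preserved. A key arithmetic observation is that a bad vertex $v$ has at most $d(v)/2$ distinct colors in its neighborhood, so two neighbors always share a color; recoloring one of them to $c^{\star}$ would make the other's color unique at $v$, provided $c^{\star}\notin\phi(N(u))$ and that this change does not break other good vertices.

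The main technical engine, and also the principal obstacle, is showing that a suitably ordered sequence of such \emph{fix} operations terminates. Naively, fixing $v$ can create a new bad vertex at a neighbor $u$ (by introducing a second $c^{\star}$ somewhere), or can destroy uniqueness at an unrelated good vertex adjacent to $u$. The standard remedies are (i) a monotone potential function---for instance a weighted sum of the number of bad vertices plus $c^{\star}$-multiplicities---that is forced to strictly decrease under each fix; (ii) a reducibility + discharging framework identifying local configurations that can always be fixed, and ruling them out structurally in the minimum counterexample; or (iii) a probabilistic approach (Lov\'asz Local Lemma or entropy compression) showing that a random reassignment of a small set of vertices with palette $\{1,\ldots,\Delta+1\}$ succeeds with positive probability. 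My first attempt would be the reducibility route, since it mirrors the DP-coloring arguments the paper uses successfully for $\Delta(G)\leq 4$.

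The genuinely hard step is that the argument must handle arbitrarily large $\Delta$ uniformly, and the tight configurations are exactly those in which the neighborhood of a bad vertex is rigidly two-colored while its second neighborhood is itself saturated, so no nearby vertex has slack. Extracting from connectivity and $\Delta\geq 3$ a \emph{local} slack guarantee that survives the global perturbation is the crux, and is precisely what blocks a naive extension of the paper's $\Delta\leq 4$ DP-coloring analysis. I expect this to be the main obstacle, and consider it the reason the conjecture remains open; any plausible resolution will likely combine Brooks-type global flexibility with a new structural reducibility lemma that converts local two-colored rigidity into a contradiction with the minimality of $G$.
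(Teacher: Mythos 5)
The statement you are trying to prove is Conjecture~\ref{conj:pcf}, which the paper does not prove and explicitly presents as open: the only cases known are $\Delta(G)=3$ (via Liu and Yu~\cite{LY2013}) and the asymptotic bound $(1+o(1))\Delta(G)$ of Liu and Reed, with $2\Delta(G)-1$ the best general bound. Your submission is therefore not a proof but a program, and you acknowledge as much in your final paragraph. The concrete gap is the entire ``main technical engine'': you never exhibit a potential function that strictly decreases under a fix operation, never prove a reducibility lemma, and never show the recoloring process terminates. Everything before that point (disposing of $K_{\Delta+1}$, invoking Brooks' theorem to reserve a color $c^{\star}$, observing that a bad vertex sees at most $\lfloor d(v)/2\rfloor$ colors) is correct but routine; the conjecture lives entirely in the part you leave open.

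Two specific reasons the sketch, as written, cannot be completed by standard means. First, the reserve-color device gives you exactly one unit of global slack: once $c^{\star}$ is used on two neighbors of some vertex $u$, it is no longer ``free'' at $u$, and with many interacting bad vertices the color $c^{\star}$ saturates; your proposed potential (bad-vertex count plus $c^{\star}$-multiplicities) has no reason to be monotone because a single recoloring can create several new bad vertices at distance two. Second, you correctly identify the tight configuration---a bad vertex whose neighborhood is rigidly two-colored and whose second neighborhood has no slack---but minimality of $G$ gives you no leverage against it: deleting a vertex changes degrees and hence the color budget $\Delta(G)+1$ is not inherited in any useful local form (this is exactly why the paper works with degree-based lists rather than $\Delta$-based palettes, and even then only reaches ${\rm degree}+3$ for $\Delta\le 4$). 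Absent a new structural lemma converting that rigidity into a contradiction, the argument does not close, and no such lemma is known.
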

	
Conjecture \ref{conj:pcf} for the case $\Delta(G)=3$ follows from an old result by Liu and Yu~\cite{LY2013}, where they studied the similar concept in the name of superliniar coloring, and their result also implies the list version. 
For general cases, the best known bound of proper conflict-free chromatic number in terms of the maximum degree is the following by Cho, Choi, Kwon and Park~\cite{CCKP2023}.

\begin{theorem}\label{thm:pcf}
  For any graph $G$, $\chi_{\textup{pcf}}(G)\leq 2\Delta(G)-1$.
\end{theorem}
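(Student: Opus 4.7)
The plan is to produce the $(2\Delta-1)$-coloring by combining a proper base coloring with a distinguishing mechanism for a designated witness neighbor at every vertex. First, apply Vizing's theorem to obtain a proper edge coloring $c:E(G)\to\{1,\ldots,\Delta+1\}$. Since $c$ is proper at every vertex, for each non-isolated $v$ we can canonically select one incident edge $e_v$ (say the smallest-colored one) and let $n(v)$ denote its other endpoint. The target is to vertex-color $G$ so that the coloring is proper and, in addition, the color of $n(v)$ differs from the color of every other neighbor of $v$; then $n(v)$ is a uniquely-colored neighbor of $v$, certifying the conflict-free condition at $v$.

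The added requirement can be encoded by an auxiliary graph: let $H$ be the graph on $V(G)$ whose edges are $\{n(v)w:v\in V(G),\, w\in N_G(v)\setminus\{n(v)\}\}$. Any proper coloring of $G\cup H$ is automatically a proper conflict-free coloring of $G$, so the problem reduces to bounding $\chi(G\cup H)\le 2\Delta-1$. By a greedy coloring argument (or Brooks' theorem after handling a short list of degenerate components), this reduces further to showing $\Delta(G\cup H)\le 2\Delta-2$, i.e., $|N_H(u)\setminus N_G(u)|\le \Delta-2$ for every $u$.

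The main obstacle is exactly this sharp degree bound. A naive count gives only $|N_H(u)\setminus N_G(u)|\le \Delta(\Delta-1)$, since $u$ may serve as $n(v)$ for as many as $\Delta$ neighbors $v$ and each such $v$ contributes up to $\Delta-1$ vertices to $N_H(u)\setminus N_G(u)$. The expected savings come from two sources: the edge coloring restricts which neighbors $v$ can designate $u$ (only those $v$ for which $uv$ has the minimal color at $v$), and many of the potential $H$-neighbors of $u$ are already $G$-neighbors and so do not contribute. Making both savings precise will likely require replacing "smallest color at $v$" by a more global choice of $n(v)$, for instance one obtained by solving an auxiliary assignment or matching problem on the edge coloring so that each vertex is selected by very few neighbors; I would also allow a small amount of recoloring in the base palette to absorb any remaining overflow.

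If this elementary counting route falls short by a constant, my fallback is an inductive argument: remove a vertex $v$ of minimum degree, apply the bound to $G-v$, and re-insert $v$ while repairing its effect on the conflict-free witnesses of its neighbors by local recoloring within the $2\Delta-1$ available colors. The delicate part of either approach is showing that the repair can always be performed, which is where the bound $2\Delta-1$, rather than some larger constant, is essential.
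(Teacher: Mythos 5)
Your proposal has a genuine gap at its central step, and in fact the reduction you set up cannot be repaired. The idea of pre-designating a witness $n(v)$ for each $v$ and properly coloring the auxiliary graph $G\cup H$ is sound as far as it goes (a proper coloring of $G\cup H$ is indeed proper conflict-free on $G$), but everything then hinges on the bound $\Delta(G\cup H)\le 2\Delta-2$, which you do not prove and which is false for \emph{every} choice of the map $n$. Consider a $\Delta$-regular graph of girth $5$ (e.g.\ the Petersen graph, $\Delta=3$). Fix any vertex $u$. If some neighbor $v$ designates $u$, i.e.\ $n(v)=u$, then $N_H(u)\supseteq N_G(v)\setminus\{u\}$, and by triangle-freeness these $\Delta-1$ vertices are all outside $N_G(u)\cup\{u\}$, so $d_{G\cup H}(u)\ge \Delta+(\Delta-1)=2\Delta-1$. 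If instead no neighbor designates $u$, then $N_H(u)\supseteq\{n(v):v\in N_G(u)\}$, which by girth $5$ consists of $\Delta$ distinct vertices outside $N_G(u)\cup\{u\}$, so $d_{G\cup H}(u)\ge 2\Delta$. Either way $\Delta(G\cup H)\ge 2\Delta-1>2\Delta-2$, so neither greedy nor Brooks yields $2\Delta-1$ colors, and no amount of cleverness in choosing $n(v)$ (assignment problems, recoloring the Vizing palette, etc.) can save the count. The underlying problem is that fixing the witness in advance is a strictly stronger demand than conflict-freeness, which only asks that \emph{some} color occur exactly once in $N(v)$; that flexibility is exactly what a correct proof must exploit, and your auxiliary-graph encoding discards it. Your fallback induction (delete a minimum-degree vertex and ``repair by local recoloring'') is likewise only a sketch: reinserting $v$ can destroy the unique witnesses of up to $\Delta$ neighbors simultaneously, and you give no mechanism for restoring them within the palette.

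For context, the paper does not prove this theorem; it quotes it from Cho, Choi, Kwon and Park, whose argument (for the more general proper $h$-conflict-free setting) is not a reduction to properly coloring a square-like auxiliary graph but a direct iterative argument that chooses colors so that witnesses emerge rather than being prescribed. So your route is genuinely different from the known one, but as written it does not close, and its key quantitative claim fails.
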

    
In fact, they showed a result on proper $h$-conflict-free coloring of a graph, which is a coloring with more general setting.
For graphs with sufficient large maximum degree, it was proved recently by Liu and Reed~\cite{LRarxiv} that $\chi_{\textrm{pcf}}(G)\leq (1+o(1))\Delta(G)$ by using the probabilistic method.
There are some other results which describe asymptotic behaviors of proper conflict-free chromatic numbers and odd chromatic numbers (See \cite{Cranston,DOP2023,Kamyczura}).

We say a graph $G$ is \emph{degree-choosable} if $G$ is $f$-choosable with $f(v)=d_{G}(v)$ for each $v \in V(G)$.
It is well known that a connected graph $G$ is not degree-choosable if and only if $G$ is a \emph{Gallai-tree}, i.e. each block of $G$ is either a complete graph or an odd cycle. 
Degree-choosable graphs have been investigated in many papers \cite{Borodin,Borodin1979,Erdos,Thomassen,Vizing}. 
We consider the analogous concept of degree-choosability for proper conflict-free coloring.
We say that a graph $G$ is \emph{proper conflict-free $({\rm degree}+k)$-choosable} if $G$ is proper conflict-free $f$-choosable with $f(v)=d_{G}(v)+k$ for each $v \in V(G)$ and some constant $k$.
The following conjecture states the fundamental problem in this concept.
	
\begin{conjecture}\label{conj:general}
	There exists an absolute constant $k$ such that every graph is proper conflict-free $({\rm degree}+k)$-choosable.
\end{conjecture}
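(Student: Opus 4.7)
The statement is a conjecture rather than a theorem, so a genuine proof plan is necessarily speculative; I would aim for $k = 3$, which is consistent with the partial results announced in the abstract (in particular, every graph with $\Delta(G) \le 4$ is proper conflict-free $({\rm degree}+3)$-choosable), and I would attempt to generalise that special case to arbitrary maximum degree.

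My first move would be to work entirely inside the DP-coloring (correspondence coloring) framework, which the authors already flag as their main tool for the $\Delta(G) \le 4$ result. Given a cover of $G$ in which each fibre over $v$ has size $d_G(v) + 3$, I would seek an independent transversal that is not only proper but also supplies, to every non-isolated vertex, a neighbour whose chosen lift is unique in its neighbourhood. DP-coloring converts the problem into one about selecting one vertex from each fibre subject to local forbidden pairs, so standard kernel/orientation techniques can in principle absorb the usual properness constraint, leaving the conflict-free condition as the residual obstacle to handle.

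The main step would then be a reducibility/discharging argument. Assuming a minimum counterexample $G$, I would try to locate a short sequence of vertices $v_1, \ldots, v_t$ such that any proper conflict-free list coloring of $G - \{v_1, \ldots, v_t\}$ extends back to $G$ one vertex at a time. For each extension step, the slack of $+3$ in the list sizes would need to absorb: (a) one forbidden colour per already-coloured neighbour, (b) at most two additional restrictions arising from supplying a uniquely coloured neighbour to $v_i$ itself, and (c) further restrictions needed to maintain the uniqueness property at already-coloured neighbours of $v_i$. Bounding (c) is the delicate part: a single choice at $v_i$ may simultaneously destroy the uniqueness property at several neighbours, so the extension has to be designed with a pre-assigned ``witness'' neighbour $u(w)$ for each vertex $w$ whose unique colour is being protected.

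The principal obstacle, in my view, is precisely this non-locality: proper conflict-free coloring is a \emph{second-order} constraint involving distance-two interactions, so a local modification can cascade across the graph. This is what makes the full conjecture hard, and why all current partial results either restrict $\Delta(G)$ or restrict to structured classes such as the subdivisions $S(G)$, where the inserted degree-$2$ vertices decouple the distance-two constraints. I would therefore expect any proof of the full conjecture to require either a global probabilistic construction (Lovász local lemma or entropy compression with a carefully designed witness-token system) or a substantially stronger reducibility lemma than currently seems available; a purely greedy or purely local argument appears unlikely to suffice, which is why I would plan to prove, at most, successive weakenings of the conjecture (for example, the case of bounded maximum average degree) before attempting the general statement.
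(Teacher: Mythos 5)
The statement you were asked about is Conjecture~\ref{conj:general}, which the paper explicitly leaves open; it contains no proof of it, only the partial results of Theorems~\ref{thm:G4 degree}, \ref{thm:1subdivided graph} and \ref{thm:main}. You correctly recognise this, and your proposal is a research plan rather than a proof, so there is nothing to verify against: neither you nor the authors establish the conjecture. That said, your sketch is well aligned with what the paper actually does for its strongest partial result. The proof of Theorem~\ref{thm:main} ($\Delta\leq 4$, degree$+3$) proceeds exactly by a minimum-counterexample argument, reduces low-degree vertices, deletes a shortest cycle $C$, and then encodes the extension problem as finding a size-$k$ independent transversal in an auxiliary DP-style cover graph $H$ whose ``long edges'' record the blocked colour pairs $(\alpha,\beta)$ at distance two along $C$ --- this is precisely the second-order, distance-two obstruction you identify as the crux, and the authors' notion of a vertex \emph{blocking} a pair, together with the colours $c_u$ reserved as witnesses for already-coloured neighbours, matches your ``pre-assigned witness'' idea. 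One caution: your accounting of the slack needed per extension step is tuned to bounded degree; for $\Delta=4$ the paper needs the full structure of $H$ (Claims~\ref{claim-2-regualr} and \ref{claim-at most one short} forcing $|X_i|=3$ and a rigid alternating list pattern) rather than a greedy count, and it is not clear this machinery scales to unbounded $\Delta$, which is consistent with your own assessment that a genuinely new global ingredient would be required for the full conjecture.
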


Since the 5-cycle is not proper conflict-free $4$-choosable, if Conjecture \ref{conj:general} has a positive answer, then the constant $k$ must be at least 3.
On the other hand, the 5-cycle is the only connected graph we know which is not proper conflict-free $({\rm degree}+2)$-choosable.
This naturally derives the following conjecture.

\begin{conjecture}\label{conj:except-5cycle}
  Every connected graph other than the 5-cycle is proper conflict-free $({\rm degree}+2)$-choosable.
\end{conjecture}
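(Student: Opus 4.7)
The plan is to attempt induction on $|V(G)|$ via a minimum counterexample argument. Suppose $G\neq C_5$ is a connected graph of smallest order that fails to be proper conflict-free $({\rm degree}+2)$-choosable, witnessed by a list assignment $L$ with $|L(v)|=d_G(v)+2$ admitting no proper conflict-free $L$-coloring. Since the paper establishes the statement when $\Delta(G)\leq 3$, one may assume $\Delta(G)\geq 4$; a leaf-block analysis should further allow one to assume $G$ is $2$-connected.

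The first move is to pick a vertex $v$ of minimum degree and apply the inductive hypothesis to $G'=G-v$ with $L$ restricted. The crucial quantitative fact is that each neighbor $w\in N(v)$ satisfies $|L(w)|\geq d_{G'}(w)+3$, giving exactly one unit of slack at the neighborhood of $v$, while $|L(u)|\geq d_{G'}(u)+2$ elsewhere. Induction then yields a proper conflict-free $L$-coloring $\phi'$ of $G'$, which one tries to extend to a color $\phi(v)\in L(v)$.

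The extension must choose $\phi(v)$ so that (a) it differs from the color of every neighbor of $v$; (b) it does not destroy the conflict-free witness of any neighbor, i.e.\ it does not coincide with the unique color appearing exactly once in $N_{G'}(w)$ when that is the only such color for $w$; and (c) some color appears exactly once in $N_G(v)$ under $\phi$. Conditions (a) and (b) each forbid at most $d(v)$ colors, whereas $|L(v)|=d(v)+2$, so a naive union bound already fails for $d(v)\geq 3$. Condition (c) is a further global constraint on $\phi'\restriction_{N(v)}$ that $\phi(v)$ alone cannot remedy.

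The main obstacle is precisely this double counting combined with (c). The natural strategy is to spend the one unit of slack at each $w\in N(v)$ \emph{during} the application of induction, so that each neighbor can be arranged to have at least two witnesses, thereby weakening (b) to almost no forbidden colors; this would be carried out either by choosing $\phi'$ carefully on $G'$ via a strengthened inductive hypothesis, or by performing Kempe-type recolorings inside $G'$ after the fact. Condition (c) should then be handled by a local exchange among the colors of $N(v)$ when no singleton color is already present in $\phi'\restriction_{N(v)}$. The genuinely hard case, and the reason the conjecture remains open, is a dense $N(v)$ where many neighbors are forced to share the same unique witness and where local exchanges are blocked by common neighbors inside $N(v)$; here one likely needs a more refined reducible configuration on a pair or triple of low-degree vertices, or a DP-coloring argument generalizing the $\Delta(G)\leq 4$ technique announced in the abstract.
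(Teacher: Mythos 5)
There is a genuine gap --- indeed, the statement you are trying to prove is stated in the paper as a \emph{conjecture} (Conjecture~\ref{conj:except-5cycle}), not a theorem; the paper only establishes it for two special classes (connected graphs with $\Delta\leq 3$, Theorem~\ref{thm:G4 degree}, and $1$-subdivisions, Theorem~\ref{thm:1subdivided graph}) and proves the weaker $(\mathrm{degree}+3)$ bound for $\Delta\leq 4$ only with substantial DP-coloring machinery. Your proposal is a strategy sketch rather than a proof, and you candidly identify the point where it breaks: after coloring $G-v$ by induction you have no control over $\phi'\restriction_{N(v)}$, so condition (c) can fail irreparably (e.g.\ if all neighbors of $v$ receive the same color and $d(v)\geq 2$, no choice of $\phi(v)$ creates a singleton color in $N_G(v)$), and conditions (a) and (b) together can forbid up to $2d(v)$ colors against a list of size $d(v)+2$. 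The proposed remedies --- a strengthened inductive hypothesis guaranteeing two witnesses per neighbor, Kempe-type recolorings, local exchanges on $N(v)$ --- are exactly the hard part and are not carried out; note also that a strengthened hypothesis of the form ``every neighbor has at least two witnesses'' is generally false even for colorable instances, and Theorem~\ref{thm:deg 1} of the paper shows how badly local attachment structures can obstruct extension when only one unit of slack is available.

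Two further technical points would need attention even to set up the induction: deleting a vertex or a leaf block can disconnect $G$ or create a $C_5$ component, on which the inductive hypothesis does not apply; and the reduction to the $2$-connected case is not automatic, since a conflict-free coloring of a leaf block must be made compatible with the witness requirement at the cut vertex in both pieces simultaneously. If you want a workable template, look at how the paper proves Theorem~\ref{thm:1subdivided graph}: there the color of the deleted vertex is committed \emph{in advance} and removed from the lists of its second neighbors before induction is invoked, which is a concrete instance of ``spending the slack during the induction'' --- but it relies on every neighbor of the deleted vertex having degree $2$, which is precisely what fails in general.
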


We cannot reduce $(\text{degree}+2)$ in Conjecture \ref{conj:except-5cycle} to $(\text{degree}+1)$ as it is shown by Caro, Petru\v{s}evski and \v{S}krekovski \cite{{Caro2}} that the cycle of length $\ell$ is not proper conflict-free $3$-colorable if $\ell$ is not divided by 3.
More strongly, there are many graphs which are not proper conflict-free $(\text{degree}+1)$-choosable in a sense.
We show the following.

\begin{theorem}\label{thm:deg 1}
  For any connected graph $H$, there is a connected graph $G$ such that $H$ is an induced subgraph of $G$ and $G$ is not proper conflict-free $(\text{degree}+1)$-choosable.
\end{theorem}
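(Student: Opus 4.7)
The plan is to attach several $4$-cycles to a single chosen vertex of $H$ using disjoint color palettes, so that the list of the required size $d_G(u)+1$ on that vertex is entirely covered by the colors forbidden to it by the cycles.

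Pick any $u\in V(H)$ and set $k = d_H(u)+1$. Construct $G$ from $H$ by adding, for each $j \in \{1,\ldots,k\}$, three new vertices $a_j,b_j,c_j$ together with edges $u a_j$, $a_j b_j$, $b_j c_j$, $c_j u$, so that $u,a_j,b_j,c_j$ form a $4$-cycle and these $k$ cycles share only the vertex $u$. Then $G$ is connected, and because every added edge is incident to one of $a_j,b_j,c_j\notin V(H)$, we have $G[V(H)] = H$. Moreover $d_G(u) = d_H(u)+2k = 3k-1$, so the required list size at $u$ is $3k$, while each $a_j,b_j,c_j$ has degree $2$ in $G$.

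I would then consider the list assignment
\[
L(a_j) = L(b_j) = L(c_j) = \{3j-2,\,3j-1,\,3j\} \quad (1 \leq j \leq k), \qquad L(u) = \{1,2,\ldots,3k\},
\]
together with an arbitrary list of size $d_H(v)+1$ on each $v\in V(H)\setminus\{u\}$. The main step is a forcing inside a single attached $4$-cycle $u a_j b_j c_j$: the conflict-free condition at $b_j$ (whose only neighbors are $a_j,c_j$) gives $\phi(a_j)\ne\phi(c_j)$, and combined with the proper colorings on $a_jb_j$ and $b_jc_j$ this implies that $\phi(a_j), \phi(b_j), \phi(c_j)$ are three pairwise distinct elements of the $3$-element set $\{3j-2,3j-1,3j\}$, and hence exhaust it. The proper colorings on $u a_j, u c_j$, together with the conflict-free condition at $a_j$ (which yields $\phi(u)\ne\phi(b_j)$), then give $\phi(u)\notin\{\phi(a_j),\phi(b_j),\phi(c_j)\} = \{3j-2,3j-1,3j\}$.

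Taking the union over $j=1,\ldots,k$ forces $\phi(u)\notin\{1,\ldots,3k\}=L(u)$, contradicting $\phi(u)\in L(u)$; hence no proper conflict-free $L$-coloring of $G$ exists. The only nontrivial quantitative point, and the main obstacle to notice, is the choice $k=d_H(u)+1$: this is exactly what equates $|L(u)| = d_G(u)+1 = 3k$ with the total size of the union of the $k$ three-element palettes, so that $L(u)$ is fully covered by the forbidden set produced by the attached cycles; everything else is a direct check on the structure of a single $4$-cycle.
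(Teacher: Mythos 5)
Your proposal is correct and is essentially identical to the paper's proof: both attach $d_H(u)+1$ four-cycles at a single vertex $u$, give each cycle a disjoint $3$-element palette that its three new vertices are forced to exhaust (via the conflict-free condition at the middle vertex and at the neighbors of $u$), and thereby forbid all $3(d_H(u)+1)=d_G(u)+1$ colors of $L(u)$. The quantitative bookkeeping matches the paper's exactly.
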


In particular, large maximum degree does not imply proper conflict-free $({\rm degree}+1)$-choosability of graphs. 
This is significantly different from the behavior of proper conflict-free $(\Delta+1)$-colorability conjectured in Conjecture \ref{conj:pcf}.
	
To support Conjecture \ref{conj:except-5cycle}, we verify it for two classes of graphs as follows.
For a graph $G$, the \emph{1-subdivision} of $G$, denoted by $S(G)$, is a graph obtained from $G$ by replacing each edge with a path of length 2.

\begin{theorem}\label{thm:G4 degree}
	If $G$ is a connected graph with maximum degree at most $3$ and $G$ is not isomorphic to $C_5$, then $G$ is proper conflict-free $({\rm degree}+2)$-choosable. 
\end{theorem}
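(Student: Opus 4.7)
I would prove Theorem \ref{thm:G4 degree} by induction on $|V(G)|$, using two external results as black boxes: the list form of Liu and Yu's theorem, which gives proper conflict-free $4$-choosability of every connected graph with maximum degree $3$, and the list form of Brooks' theorem. The base case $|V(G)| = 1$ is immediate, so the work is in the inductive step, which I split on whether $G$ has a vertex of degree $1$.

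In \textbf{Case 1}, suppose $G$ has a leaf $v$ with neighbor $u$, and set $G' = G - v$; then $G'$ is connected, subcubic, and smaller than $G$. If $G' \neq C_5$, I shrink $L(u)$ to any subset $L'(u) \subseteq L(u)$ of size $d_{G'}(u) + 2 = d_G(u) + 1$ and keep the other lists unchanged, obtaining a $(\text{degree}+2)$-list assignment $L'$ on $G'$. The inductive hypothesis produces a proper conflict-free $L'$-coloring $\phi'$ of $G'$, and I extend it to $v$ by choosing $\phi(v) \in L(v) \setminus \{\phi'(u)\}$, additionally excluding $\phi'(u_1)$ in the case $d_G(u) = 2$ (where $u_1$ is the other neighbor of $u$ in $G$); the remaining list is non-empty since $|L(v)| \geq 3$. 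A short case analysis on $d_G(u) \in \{1,2,3\}$ verifies that $u$'s conflict-free condition in $G$ follows from the corresponding condition in $G'$ together with the chosen color $\phi(v)$, while $v$'s condition is automatic since it has only one neighbor. If instead $G' = C_5$, then $G$ is exactly $C_5$ with one pendant, and I handle this single graph by a direct argument that splits on whether the lists of the two distance-$2$ neighbors of the pendant's attachment vertex share a common color.

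In \textbf{Case 2}, suppose $\delta(G) \geq 2$. Then every list in $L$ has size at least $4$. If $\Delta(G) \leq 2$, then $G$ is a cycle $C_n$ with $n \neq 5$; since a proper conflict-free coloring of $C_n$ coincides with a proper coloring of the square $C_n^2$, and $C_n^2 = K_5$ if and only if $n = 5$, the list form of Brooks' theorem gives $\chi_{\ell}(C_n^2) \leq 4$, which supplies a proper conflict-free $L$-coloring of $G$. If $\Delta(G) = 3$, I apply the list form of the Liu--Yu theorem directly, using that every list has size at least $4 = \Delta(G)+1$.

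The main obstacle I expect is the special sub-case $G' = C_5$ in Case 1. This is exactly the boundary situation where the unique excluded graph reappears just after one reduction step, so the ad-hoc argument for $C_5$ plus a pendant must carefully exploit the two sources of slack provided by the pendant: its own list of size at least $3$, and the \emph{relaxed} conflict-free constraint at the attachment vertex (whose three neighbors now include the pendant, so the constraint no longer forces the two opposite $C_5$-vertices to receive distinct colors, which is the obstruction for plain $C_5$). A secondary subtlety is certifying that $u$'s conflict-free condition in $G$ is implied by that in $G'$ together with the chosen color of $v$; this is routine but does force the explicit enumeration of $d_G(u) \in \{1,2,3\}$ sketched above.
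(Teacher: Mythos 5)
Your proposal is correct and follows the same skeleton as the paper's proof: reduce to the minimum-degree-$\geq 2$ case by removing leaves (extending the coloring back over each leaf is handled identically --- your case analysis on $d_G(u)\in\{1,2,3\}$ is exactly the observation that one must also avoid a color that certifies $u$'s conflict-free condition, which is automatic when $d_G(u)=3$ since the two old neighbors already have distinct colors), and then invoke the list version of Liu--Yu for the core when $\Delta=3$. Two sub-arguments differ. First, for cycles you pass to $C_n^2$ and apply degree-choosability/list-Brooks, noting $C_n^2=K_5$ only for $n=5$; this is cleaner and more conceptual than the paper's treatment, which splits on whether all lists are equal (citing Caro et al.) and otherwise runs an explicit greedy coloring around the cycle. (For $n=3,4$ your "list Brooks" is really just greedy on lists of size $4\geq d+1$, but the conclusion $\chi_\ell(C_n^2)\leq 4$ is right in all cases $n\neq 5$.) Second, the boundary case where deleting the leaf leaves $C_5$: you isolate it and only sketch a resolution, whereas the paper avoids any special case by giving one uniform greedy coloring of "cycle of length $t$ plus a pendant" for every $t\geq 3$ (color $w_2,\dots,w_t$ greedily, then $w_1$ from its list of size $\geq 5$ avoiding $\phi(w_2),\phi(w_3),\phi(w_{t-1}),\phi(w_t)$, then the pendant). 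Your sketch does complete: the constraints on the five cycle vertices of $C_5$ plus a pendant at $w_1$ amount to properly list-coloring $K_5$ minus the edge $w_2w_5$ (lists of size $\geq 5$ at $w_1$ and $\geq 4$ elsewhere), which is degree-choosable since $K_5-e$ is not a Gallai tree, after which the pendant's color is chosen to avoid $\phi(w_1)$ and, if necessary, the common color of $w_2$ and $w_5$. So the only incomplete spot is this sub-case, and it is genuinely fillable; I would just encourage you to either write it out or adopt the paper's uniform cycle-plus-pendant coloring, which makes the $C_5$ exception disappear.
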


\begin{theorem}\label{thm:1subdivided graph}
  For every graph $G$, $S(G)$ is proper conflict-free $({\rm degree}+2)$-choosable.
\end{theorem}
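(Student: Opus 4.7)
My plan is a two-phase construction that exploits the fact that every subdivision vertex in $S(G)$ has degree exactly $2$, hence a list of size at least $4$. It suffices to handle each connected component of $G$ separately, so assume $G$ is connected; the cases $|V(G)| \leq 2$ are trivial. If $G$ has a leaf $v$ with neighbor $u$, I induct on $G - v$: given a PCF $L$-coloring of $S(G-v)$, I extend by choosing $\phi(v) \in L(v) \setminus \{\phi(u)\}$ (at least $2$ choices) and then $\phi(w_{uv}) \in L(w_{uv})$ avoiding $\phi(u)$, $\phi(v)$, and (if $d_G(u) \geq 2$) the PCF witness color $c_u$ at $u$ from the inductive coloring, which still leaves at least $1$ choice from a list of size $\geq 4$. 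So we may assume additionally that $\delta(G) \geq 2$.

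If $G$ is a cycle $C_k$ with $k \geq 3$, then $S(G) = C_{2k}$ and the PCF condition on $S(G)$ reduces to a proper coloring of $C_{2k}^2$. For $k \geq 3$, $C_{2k}^2$ is $4$-regular and is neither $K_5$ nor a Gallai tree, so the list version of Brooks' theorem gives $\chi_\ell(C_{2k}^2) \leq 4$, matching our list size. Hence we may further assume that $G$ has some vertex of degree at least $3$.

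I now carry out the two-phase construction. \emph{Phase 1:} choose a proper $L$-coloring $\phi$ of $G$ on $V(G)$; this exists greedily since $|L(v)| \geq d_G(v) + 2$. \emph{Phase 2:} for each edge $e = uv \in E(G)$ set $L'(w_e) := L(w_e) \setminus \{\phi(u), \phi(v)\}$, of size at least $2$. Any choice $\phi(w_e) \in L'(w_e)$ gives a proper $L$-coloring of $S(G)$, and the PCF condition at $w_e$ itself is automatic from Phase 1. To satisfy the PCF condition at each $v \in V(G)$ with $d_G(v) \geq 2$, I introduce the auxiliary graph $\mathcal{C}$ on vertex set $E(G)$, where two edges of $G$ are adjacent in $\mathcal{C}$ iff they share an endpoint of degree $2$ in $G$. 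Then $\Delta(\mathcal{C}) \leq 2$, and the cycle components of $\mathcal{C}$ correspond exactly to cycle components of $G$. Under our assumption that $G$ has a vertex of degree $\geq 3$, $\mathcal{C}$ has no cycle components. The PCF condition at a degree-$2$ vertex of $G$ is precisely a proper $2$-list-vertex-coloring constraint on $\mathcal{C}$; since $\mathcal{C}$ is a disjoint union of paths, such a $2$-list-coloring always exists.

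The remaining obligation is the PCF condition at the degree-$\geq 3$ vertices, which asks only that some color appear exactly once among the incident subdivision vertices (a much weaker condition). The main obstacle I expect is a ``branching'' vertex $v$ of degree $\geq 3$ whose incident edges all lie on forced chains of $\mathcal{C}$, so that a rigid $2$-coloring of these chains could force all the incident $\phi(w_e)$ to coincide. To resolve this, I plan to use the one unit of slack in Phase 1 (since $|L(v)| \geq d_G(v) + 2$ is one more than needed for proper $L$-colorability of $G$): by re-choosing $\phi$ at $v$ or at a vertex on an adjacent $\mathcal{C}$-chain, I can alter the residual lists $L'$ and thereby diversify the colors seen at $v$. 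A careful local case analysis at such branching vertices, especially when several $\mathcal{C}$-chains meet at $v$, is where I expect the main technical work of the proof to concentrate.
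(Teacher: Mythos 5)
Your reductions for leaves and for the pure-cycle case are sound (and your handling of $S(C_k)=C_{2k}$ via degree-choosability of $C_{2k}^2$ is a clean alternative to the paper's treatment, which invokes PCF $4$-colorability of long cycles only when all lists coincide). The reduction of the PCF condition at degree-$2$ vertices of $G$ to a $2$-list-coloring of the auxiliary path forest $\mathcal{C}$ is also correct. But the proposal stops exactly where the real difficulty begins: you never establish the PCF condition at vertices of degree at least $3$; you only announce a plan (``use the one unit of slack in Phase 1'', ``a careful local case analysis''). This is not a minor loose end. Concretely: if $v$ has degree $4$ and its four incident edges are endpoints of rigid all-$\{a,b\}$ chains of $\mathcal{C}$ forcing the pattern $a,a,b,b$ at the four subdivision neighbours of $v$, then no colour occurs exactly once at $v$. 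Note also that two of the edges at $v$ can be the two ends of the \emph{same} chain (a cycle of $G$ through $v$ whose other vertices all have degree $2$), so their colours cannot be chosen independently, and several branching vertices can share chains, so ``local'' repairs at different branching vertices may conflict. Your only escape mechanism is to re-choose $\phi$ at one vertex of $G$, and you give no argument that one unit of slack per vertex suffices globally.

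This missing step is precisely what the paper's proof is engineered around: instead of a global two-phase colouring, it deletes a single vertex $v_1$ together with all its incident subdivision vertices, pre-selects a colour $\alpha\in L(v_1)$ lying in at most $\min\{3,d-1\}$ of the lists of those subdivision vertices (a counting step that uses $|L(v_1)|\geq d+2$), forbids $\alpha$ at the neighbours of $v_1$ before invoking induction, and then explicitly manufactures a colour $\beta$ occurring exactly once among the subdivision neighbours of $v_1$. Some replacement for that argument --- or a genuinely worked-out version of your slack/recolouring step that handles shared chains and interacting branching vertices --- is needed before your proposal counts as a proof.
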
 

If Conjecture \ref{conj:except-5cycle} has a positive answer, then obviously it implies that Conjecture \ref{conj:general}  has a positive answer by choosing $k=3$. 
Thus we consider proper conflict-free $({\rm degree}+3)$-choosability of graphs, and prove the following by using a technique of DP-coloring, which was introduced by Dvo\v{r}\'{a}k and Postle~\cite{Dvorak}.
	
\begin{theorem}\label{thm:main}
	Every graph $G$ with maximum degree at most $4$ is $({\rm degree}+3)$-choosable. In particular, every graph with maximum degree at most $4$ is proper conflict-free $7$-choosable.
\end{theorem}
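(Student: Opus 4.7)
The plan is to argue by contradiction on a minimum counterexample and to adapt the DP-coloring machinery to encode the conflict-free constraints. Suppose $G$ has $\Delta(G)\leq 4$, $L$ is an $f$-list assignment with $f(v)=d_G(v)+3$, and $G$ admits no proper conflict-free $L$-coloring; choose $(G,L)$ with $|V(G)|$ minimum and, subject to that, with $|E(G)|$ minimum. Then $G$ is connected. First I would dispatch the standard easy reductions: an isolated vertex is trivial, and if $v$ is a degree-$1$ vertex with neighbor $u$, applying minimality to $G-v$ yields a proper conflict-free $L$-coloring $\phi$ of $G-v$, which extends to $v$ by picking $\phi(v)\in L(v)\setminus\{\phi(u),c^{\ast}(u)\}$, where $c^{\ast}(u)$ is the uniqueness witness for $u$ in $G-v$ (this witness is needed only when $d_{G-v}(u)\geq 1$). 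Since $|L(v)|\geq 4$, such a choice exists, and conflict-freeness at $v$ itself is automatic because $v$ has a single neighbor. Hence we may assume $\delta(G)\geq 2$.

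Next I would fix a minimum-degree vertex $v$ with $d=d_G(v)\in\{2,3,4\}$ and pass to DP-coloring. The strategy is to build a DP-cover $\mathcal{H}$ on $G-v$ whose matchings combine the ordinary list-coloring constraints of $G-v$ with virtual matchings placed between pairs of vertices in $N(v)$ that enforce ``good'' neighborhood patterns. Concretely, whenever a coloring of $N(v)$ would force every color class within $N(v)$ to have size at least two (namely, all four neighbors receive the same color when $d=4$, the three neighbors all receive the same color when $d=3$, both neighbors receive the same color when $d=2$, plus in the $d=4$ case the pattern of two equal pairs), the virtual matchings rule it out. In parallel one would prove, by induction on $|V(G)|$, the DP-coloring strengthening of the theorem, so that the inductive hypothesis delivers a valid transversal of $\mathcal{H}$, i.e.\ a proper conflict-free coloring of $G-v$ compatible with the virtual matching constraints. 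Extending $\phi$ to $v$ then requires avoiding $\{\phi(u):u\in N(v)\}$ for properness and preserving each neighbor's uniqueness witness after the addition of $v$.

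The heart of the proof, and the main obstacle, is the case $d=4$. Here $|L(v)|=7$ while naively one must exclude up to $d+d=8$ colors at $v$: the four neighbor colors together with the four witness colors $c^{\ast}(u)$ of the neighbors. The needed slack must be produced by the DP-cover: one has to arrange the virtual matchings so that, for at least one neighbor $u\in N(v)$, the pre-selected witness $c^{\ast}(u)$ already lies in $\{\phi(w):w\in N(v)\setminus\{u\}\}$, or else replace the initially chosen witness at some $u$ by another color of $L(u)$ that survives the restoration of $v$ as a neighbor. Establishing the existence of an $\mathcal{H}$ with these properties, and performing the case analysis that identifies which neighbor's witness may be sacrificed in each local configuration, is where the DP-coloring technique is genuinely adapted and where the bulk of the technical work lies; once this is done, the extension of $\phi$ to $v$ is immediate and contradicts the minimality of $(G,L)$.
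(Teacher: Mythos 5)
Your skeleton --- minimum counterexample, delete a single minimum-degree vertex $v$, color $G-v$ via a DP-cover augmented by virtual matchings on $N(v)$, then extend to $v$ --- is not the paper's route (the paper first forces $\delta(G)\geq 3$ and girth at least $5$, deletes an entire shortest cycle $C$, and reduces the extension to finding an independent set of size $|V(C)|$ in an auxiliary cover-type graph $H$ built over $C$), and more importantly it has a genuine gap at exactly the point you flag as ``where the bulk of the technical work lies.'' The counting obstruction you identify is real and is the whole theorem: for $d=4$ one must avoid up to $4+4=8$ colors from a list of $7$, and the same failure already occurs at $d=3$ (up to $3+3=6$ forbidden colors from a list of $6$), which your write-up does not even register. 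Single-vertex deletion therefore only works cleanly for $d\leq 2$ (where $2+2=4<5$), which is why the paper abandons it beyond that point; asserting that ``the needed slack must be produced by the DP-cover'' without exhibiting a construction leaves the core of the proof missing.

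There are also two structural problems with the proposed reduction itself. First, the virtual matchings you want among $N(v)$ (to forbid the all-equal and two-pairs patterns when $d=4$) join each neighbor $u$ to up to three other neighbors, so in the cover $u$ has degree $d_G(u)-1+3=d_G(u)+2$ against a list of size $d_G(u)+3$ --- that is a $(\mathrm{degree}+1)$ instance, not $(\mathrm{degree}+3)$ --- and the underlying graph of the cover can acquire maximum degree $6>4$; hence the inductive hypothesis does not apply to the reduced instance. Second, the induction you invoke is a ``DP-coloring strengthening'' of proper conflict-free choosability that you never formulate; the conflict-free condition is a constraint on the multiset of colors in a neighborhood, not a pairwise constraint, so it does not transfer through a DP-cover in any standard way, and it is not clear such a strengthening is even true. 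The paper sidesteps both issues: it uses minimality only for genuine induced subgraphs of $G$ (adding at most one edge $u_iw_i$ per deleted cycle vertex, which keeps every degree and the maximum degree unchanged), and it confines the DP-style bookkeeping to the final cycle via the auxiliary graph $H$, where the problem becomes a concrete independent-transversal argument (Claims \ref{claim-size}--\ref{claim-at most one short}).
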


Theorem \ref{thm:pcf} implies the bound $\chi_{\rm pcf}(G)\leq 5$ for graphs with maximum degree 3, and the bound $\chi_{\rm pcf}(G)\leq 7$ for graphs with maximum degree 4. Therefore, Theorems~\ref{thm:G4 degree} and \ref{thm:main} respectively extend this two results to list version.

The paper is organized as follows. 
We give a proof of Theorem \ref{thm:deg 1} in Section \ref{sec:pcf deg 1}, proofs of Theorems \ref{thm:G4 degree} and \ref{thm:1subdivided graph} in Section \ref{sec:pcf deg 2}, and a proof of Theorem \ref{thm:main} in Section \ref{sec:pcf deg 3}.
	
We end this section by introducing some notations. For a positive integer $k$, let $[k]$ denote the set of integers $\{0, 1, \dots , k-1\}$. Given a graph $G$ and a subgraph $H$ of $G$, for $v \in V(G)$ (not necessary in $H$), we denote $N_H(v)$ the neighbors of $v$ in $V(H)$. For a set $S \subseteq V(G)$, we denote  $N_H(S)$ all the vertices in $V(H)$ which have a neighbor in $S$. For a (partial) coloring $\phi$ of $G$ and a subset $R \subseteq V(G)$, let $\phi(R)$ be the set comprised of all colors used on $R$ of $\phi$ (not every vertex in $R$ needs to be colored in $\phi$). For a (partial) proper $h$-conflict-free coloring $\phi$ of $G$, let $\mathcal{U}_{\phi}(v,G)$ be the set of colors that appear exactly once at the neighborhood of $v$ in $G$, and let $\mathcal{U}_{\phi}(R,G)=\bigcup_{v \in R}\mathcal{U}_\phi(v,G)$.

\section{Degree plus 1 colors}\label{sec:pcf deg 1}

In this section, we give a proof of Theorem \ref{thm:deg 1}.

Let $H$ be an arbitrary connected graph and let $v_0$ be a vertex of $H$.
Let $d_H(v_0)=d$.
For each $i\in \{1,2,\dots , d+1\}$, let $C_i=v_0^iv_1^iv_2^iv_3^iv_0^i$ be a cycle of length $4$.
Let $G$ be a graph obtained from $H$ and $C_1, C_2, \dots , C_{d+1}$ by identifying $v_0$ and $v_0^1, v_0^2, \dots , v_0^{d+1}$ into a vertex $v$.
Then $H$ is an induced subgraph of $G$.
We define a list assignment $L$ of $G$ as follows.
\begin{itemize}
  \item for each $i\in \{1,2,\dots , d+1\}$, let $L(v_1^i)=L(v_2^i)=L(v_3^i)=\{3i-2, 3i-1, 3i\}$,
  \item $L(v)=\{1,2,\dots , 3d+3\}$, and
  \item for each vertex $u\in V(G)\setminus \{v\}$, let $L(u)=\{1,2,\dots , d_G(u)\}$.
\end{itemize}
Since $d_G(v)=d_H(v_0)+2(d+1)=d+2(d+1)=3d+2$, $L$ is a list assignment of $G$ such that $|L(u)|\geq d_G(u)+1$ for every vertex $u$ of $G$.
We show that $G$ is not proper conflict-free $L$-colorable.
Suppose that $G$ admits a proper conflict-free $L$-coloring $\phi$.
For each $i\in \{1,2,\dots , d+1\}$, since $\mathcal{U}_{\phi}(v_2^i,G)\neq \emptyset$, we have $\{\phi(v_1^i),\phi(v_2^i),\phi(v_3^i)\}=\{3i-2,3i-1,3i\}$.
In addition, since $\phi(v_j^i)\neq \phi(v)$ and $\mathcal{U}_{\phi}(v_j^i,G)\neq \emptyset$ for $j=1,3$, we have $\phi(v)\notin \{\phi(v_1^i),\phi(v_2^i),\phi(v_3^i)\}=\{3i-2,3i-1,3i\}$.
Combining these, we have $\phi(v)\notin \bigcup_{i=1}^{d+1}\{3i-2,3i-1,3i\}=\{1,2,\dots , 3d-3\}=L(v)$, a contradiction.

\section{Degree plus 2 colors}\label{sec:pcf deg 2}

In this section, we give proofs of Theorems \ref{thm:G4 degree} and \ref{thm:1subdivided graph}.

\subsection{Proof of Theorem \ref{thm:G4 degree}}
Let $L$ be a list assignment of $G$ with $|L(v)| \geq d_{G}(v)+2$ for each $v \in V(G)$. The statement is trivial when $G$ is isomorphic to $K_1$ or $K_2$, so we assume that $G$ has at least three vertices.
	
If $G$ is a cycle of length $\ell\leq 4$, then obviously $G$ admits a proper conflict-free $L$-coloring with $\ell$ distinct colors. We assume that $G=u_1u_2\cdots u_{\ell}u_1$ is a cycle for some $\ell \geq 6$. If $L(u_i)=L(u_j)$ for any $i \neq j$, then by the result of Caro et al.~\cite{Caro2}, $G$ is $L$-colorable. Thus without loss of generality, we assume that $L(u_1) \neq L(u_k)$. Let $\phi(u_1)$ be a color in $L(u_1)\setminus L(u_k)$. We choose colors for $\{u_2,u_3,\dots , u_{k-1}\}$ in the ascending order of indices such that $\phi(u_2)\in L(u_2)\setminus\{\phi(u_1)\}$ and $\phi(u_i)\in L(u_i)\setminus\{\phi(u_{i-1}), \phi(u_{i-2})\}$. Finally, we choose a color in $L(u_k)\setminus\{\phi(u_2),\phi(u_{k-2}),\phi(u_{k-1})\}$ as $\phi(u_{k-1})$ to obtain an $L$-coloring $\phi$ of $G$. It is easy to check $\phi$ is a proper conflict-free $L$-coloring of $G$.

Hence we may assume that $G$ is not a cycle, and thus $\Delta(G)=3$. 
Let $G_0, G_1, \dots , G_k$ be a sequence of subgraphs of $G$ such that $G_0=G$, $G_k$ is either isomorphic to $K_2$ or a graph with minimum degree at least 2, and $G_{i+1}=G_i-v_i$ for some $v_i\in V(G_i)$ with $d_{G_i}(v_i)=1$ for each $i\in\{0,1,\dots , k-1\}$. Note that $G_i$ is connected for every $i\in\{0,1,\dots , k\}$. 
Then for each $i \in \{0,1,\dots , k-1\}$, $G_i$ is proper conflict-free $L$-colorable if $G_{i+1}$ is proper conflict-free $L$-colorable. Indeed, suppose $\phi$ is a proper conflict-free $L$-coloring of $G_{i+1}$, and $x$ is the neighbor of $v_i$ in $G_i$. We choose a color $c_x\in \mathcal{U}_{\phi}(x,G_{i+1})$ arbitrarily, and choose a color in $L(v_i)\setminus\{\phi(x),c_x\}$ as $\phi(v_i)$ to extend $\phi$ to $G$. Obviously $\phi$ is a proper conflict-free $L$-coloring of $G_i$, as desired.
	
By the last paragraph, it suffices to show that $G_i$ is proper conflict-free $L$-colorable for some $i\in \{1,2,\dots ,k\}$. If $G_k$ is isomorphic to $K_2$, then it is proper conflict-free $L$-colorable. If $G_k$ has a vertex of degree 3, then by the result in Liu and Yu~\cite{LY2013}, $G_k$ is proper conflict-free $4$-choosable, and hence proper conflict-free $L$-colorable. Thus, we assume that $G_k=w_1w_2\cdots w_t w_1$ is a cycle of length $t \geq 3$. Without loss of generality, we assume $v_{k-1}$ is adjacent to $w_1$, and thus $d_G(w_1)=3$. Now we define an $L$-coloring $\phi$ of $G_{k-1}$ as follows: We choose colors $\phi(w_2)\in L(w_2)$, $\phi(w_3)\in L(w_3)\setminus\{\phi(w_2)\}$, and choose a color $\phi(w_j)$ for each of $\{w_4, w_5,\dots ,w_t\}$ in the ascending order of indices such that $\phi(w_j)\in L(w_j)\setminus\{\phi(w_{j-1}),\phi(w_{j-2})\}$. Finally we choose a color in $L(w_1)\setminus\{\phi(w_2), \phi(w_3), \phi(w_{t-1}), \phi(w_{t})\}$ as $\phi(w_1)$ and a color in $L(v_{k-1})\setminus\{\phi(w_1), \phi(w_2)\}$ as $\phi(v_{k-1})$. It is easy to check that $\phi$ is a proper conflict-free $L$-coloring of $G_{k-1}$.

\subsection{Proof of Theorem \ref{thm:1subdivided graph}}

Let $G$ be a graph with $n$ vertices $\{v_1, v_2,\dots , v_n\}$. The proof goes by induction on $n$. If $G$ is disconnected, then each component of $S(G)$ has a desired coloring and we simply combine them to obtain a desired coloring of $S(G)$. Thus we assume $G$ is connected. If $n=1$, then obviously $S(G)$ is isomorphic to $K_1$, which is proper conflict-free $(\text{degree}+2)$-choosable, so we assume $n\geq 2$. Let $S(G)$ be labeled as $V(S(G))=\{v_i\mid 1\leq i\leq n\}\cup \{u_e\mid e\in E(G)\}$ and $E(S(G))=\{v_iu_e\mid v_i\text{ incident with }e \text{ in }G\}$. Note that $d_{S(G)}(v_i)=d_G(v_i)$ for $i\in\{1,2,\ldots, n\}$ and $d_{S(G)}(u_e)=2$ for every $e\in E(G)$. Let $L$ be a list assignment of $G$ such that $|L(v_i)|\geq d_G(v_i)+2$ for each $i\in \{1,2,\dots ,n\}$ and $|L(u_e)|=4$ for each edge $e\in E(G)$. If $G$ is isomorphic to $K_2$, then we choose distinct colors $\phi(v_1)\in L(v_1)$ and $\phi(v_2)\in L(v_2)$, and choose a color in $L(u_{v_1v_2})\setminus \{\phi(v_1), \phi(v_2)\}$ as $\phi(u_{v_1v_2})$ to obtain a proper conflict-free $L$-coloring $\phi$ of $G$. Now we assume that $n\geq 3$ and the statement holds for every proper subgraph of $G$.
  
Suppose that $G$ contains a vertex of degree 1. Without loss of generality, we assume that $d_G(v_1)=1$ and let $e=v_1v_2$ be the unique edge incident with $v_1$ in $G$. We choose an arbitrary color $\alpha$ from $L(v_1)$. Let $G'=G-v_1$ and let $L'$ be a list assignment of $S(G')$ defined by $L'(v_2)=L(v_2)\setminus\{\alpha\}$ and $L'(w)=L(w)$ for every other vertex $w$. By induction hypothesis, $S(G')$ admits a proper conflict-free $L'$-coloring $\phi$. As $n\geq 3$ and $G$ is connected, $v_2$ is not an isolated vertex of $S(G')$, so we choose a color $c_2\in\mathcal{U}_{\phi}(v_2,S(G'))$ arbitrarily. Then we let $\phi(v_1)=\alpha$ and choose a color in $L(u_e)\setminus\{\phi(v_2),c_2,\alpha\}$ as $\phi(u_e)$ to extend $\phi$ to $S(G)$. It is easy to check that $\phi$ is a proper $L$-coloring of $S(G)$ and $\mathcal{U}_{\phi}(w,S(G))\neq \emptyset$ for every non-isolated vertex $w\in V(S(G'))\setminus\{v_2\}$. Since $\phi(u_e)\neq c_2$, the color $c_2$ is contained in $\mathcal{U}_{\phi}(v_2,S(G))$. Furthermore, $\alpha\in \mathcal{U}_{\phi}(u_e,S(G))$ since $\phi(v_2)\neq \alpha$, and $\phi(u_e)\in \mathcal{U}_{\phi}(v_1,S(G))$, these imply that $\phi$ is a proper conflict-free $L$-coloring. So we assume that $\delta(G) \geq 2$, and thus $\delta(S(G)) \geq 2$ as well.
  
If every vertex of $G$ has degree 2, and $L(u) = L(v)$ for any two distinct vertices $u,v$ in $S(G)$, then $S(G)$ is a cycle of length $2n$ and it is proper conflict-free $L$-colorable since every cycle other than the 5-cycle is proper conflict-free 4-colorable. Thus without loss of generality, we may assume that either $d_{S(G)}(v_1)=2$ and there is an edge $e$ incident to $v_1$ in $G$ satisfying $L(v_1)\neq L(u_e)$, or $d_{S(G)}(v_1)\geq 3$. Without loss of generality, we assume that $N_G(v_1)=\{v_2,v_3,\dots ,v_{d+1}\}$, where $d$ is the degree of $v_1$ in $G$. Let $e_i=v_1v_i\in E(G)$ for each $i\in \{2,3,\dots ,d+1\}$. For each color $c\in L(v_1)$, let $U_c=\{u_{e_i}\mid 2\leq i\leq d+1, c\in L(u_{e_i})\}$. Then there is a color $\alpha\in L(v_1)$ such that $|U_{\alpha}|\leq \min\{3,d-1\}$. Indeed, if $d=2$, then we may assume that $L(v_1)\neq L(u_{e_2})$, and thus the color $\alpha\in L(v_1)\setminus L(u_{e_2})$ satisfies $U_{\alpha}\subseteq \{u_{e_3}\}$. If $d=3$, then since $|L(v_1)|=5$, a color $\alpha\in L(v_1)\setminus L(u_{e_2})$ satisfies $U_{\alpha}\subseteq \{u_{e_3}, u_{e_4}\}$. If $d\geq 4$, then we have $|L(v_1)|\geq d+2$,  which implies  that there must be a color $\alpha\in L(v_1)$ such that $|U_{\alpha}|\leq 3$, for otherwise $4(d+2) \leq \sum_{c\in L(v_1)}|U_c|=\sum_{i=2}^{d+1} |L(u_{e_i})|=4d$, a contradiction. Without loss of generality, we may assume that $u_{e_2}\notin U_{\alpha}$.

Let $G'=G-(\{v_1\}\cup \{u_{e_i}\mid 2\leq i\leq d+1\})$ and let $L'$ be a list assignment of $S(G')$ such that $L'(v_i)=L(v_i)\setminus\{\alpha\}$ for each $i\in \{2,3,\dots ,d+1\}$ and $L'(w)=L(w)$ for every vertex $w\in V(S(G'))\setminus\{v_i\mid 2\leq i\leq d+1\}$. By induction hypothesis, $S(G')$ admits a proper conflict-free $L'$-coloring $\phi$. We extend $\phi$ to a proper conflict-free $L$-coloring of $G$. Note that $G'$ has no isolated vertices since $\delta(G)\geq 2$. For each $i\in \{2,3,\dots ,d+1\}$, let $c_i$ be a color in $\mathcal{U}_{\phi}(v_i, S(G'))$. Let $\phi(v_1)=\alpha$, and we choose $\phi(u_{e_i})\in L(u_{e_i})\setminus\{\phi(v_i), c_i, \alpha\}$ arbitrarily for each $u_{e_i}\in U_{\alpha}$. We define a color $\beta$ as follows: If $U_{\alpha}=\emptyset$, then let $\beta$ be a color in $L(u_{e_2})\setminus\{\phi(v_2), c_2, \alpha\}$. If $U_{\alpha}\neq\emptyset$ and $|\phi(U_{\alpha})|=1$, then since $\alpha\notin L(u_{e_2})$, let $\beta$ be a color in $L(u_{e_2})\setminus(\{\phi(v_2), c_2, \alpha\}\cup \phi(U_{\alpha}))$. Otherwise, i.e. $U_{\alpha}\neq\emptyset$ and $|\phi(U_{\alpha})|\geq 2$, the fact $|U_{\alpha}|\leq 3$ implies that some color appears exactly once at $U_{\alpha}$, so let $\beta$ be the color. Finally for each uncolored vertex $u_{e_i}$, since $u_{e_i}\notin U_{\alpha}$, we can choose a color $L(u_{e_i})\setminus\{\phi(v_j), c_j, \alpha, \beta\}$ as $\phi(u_{e_i})$. It is easy to verify that $\phi$ is a proper $L$-coloring of $S(G)$, and that $\mathcal{U}_{\phi}(w,S(G))=\mathcal{U}_{\phi}(w, S(G'))\neq\emptyset$ for each $w\in V(S(G'))\setminus\{v_i\mid 2\leq i\leq d+1\}$. Since $\phi(u_{e_i})\neq c_i$, $\mathcal{U}_{\phi}(v_i, S(G))$ contains the color $c_i$ for each $i\in \{2,3,\dots , d+1\}$. By definition of $L'$, we have $\phi(v_i)\neq\alpha=\phi(v_1)$, so $\alpha\in \mathcal{U}_{\phi}(u_{e_i}, S(G))$ for each $j\in \{2,3,\dots , d+1\}$. Furthermore, by the choice of colors $\{\phi(u_{e_i})\mid 2\leq i\leq d+1\}$, we have $\beta\in \mathcal{U}_{\phi}(v_1, S(G))$. Hence $\phi$ is a proper conflict-free $L$-coloring of $S(G)$. This completes the proof of Theorem \ref{thm:1subdivided graph}.

\vspace{\baselineskip}

We remark that $({\rm degree}+2)$ cannot be reduced to $({\rm degree}+1)$ in Theorems~\ref{thm:G4 degree} and \ref{thm:1subdivided graph}. Indeed, Caro, Petru\v{s}evski and \v{S}krekovski \cite{{Caro2}} observed that if $n$ is not divided by $3$, then $C_n$ is not proper conflict-free $3$-choosable. Clearly $S(C_{3k+1})=C_{6k+2}$ for every positive integer $k$, and thus $S(C_{3k+1})$ is not proper conflict-free $({\rm degree}+1)$-choosable.

\section{Degree plus 3 colors}\label{sec:pcf deg 3}

In this section, we give a proof of Theorem \ref{thm:main}.

We adapt the technique of the DP-coloring, which was introduced by Dvo\v{r}\'{a}k and Postle~\cite{Dvorak}.
Recall that $[k]$ denote the set of integers $\{0, 1, \dots , k-1\}$, and in this section, the indices are taken with modulo $k$ when it is related to the label of vertices.
	
Suppose that the statement is false, and let $G$ be a minimum counterexample in terms of the order, i.e. there is a list assignment $L$ of $G$ satisfying $|L(v)|\geq d_G(v)+3$ for each $v \in V(G)$ such that $G$ is not proper conflict-free $L$-colorable. Clearly, $G$ is connected and has at least 5 vertices as every vertex has at least 4 colors.

\begin{claim}\label{cl:smallvertex}
	The minimum degree of $G$ is at least 3.
\end{claim}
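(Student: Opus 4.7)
I would argue by contradiction, supposing $G$ has a vertex $v$ with $d_G(v) \leq 2$. Since $G$ is connected and has at least $5$ vertices, $d_G(v) \in \{1, 2\}$, so there are three subcases: $d_G(v) = 1$; $d_G(v) = 2$ with $N_G(v) = \{u_1, u_2\}$ and $u_1 u_2 \in E(G)$; and the same with $u_1 u_2 \notin E(G)$. In every case the strategy is to apply the minimality of $G$ to a suitable smaller graph to obtain a proper conflict-free $L$-coloring and then extend it to $v$.

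For $d_G(v) = 1$, let $u$ be $v$'s unique neighbor; removing $v$ preserves the list-size condition, so minimality yields a proper conflict-free $L$-coloring $\phi$ of $G - v$. Picking any $c_0 \in \mathcal{U}_\phi(u, G-v)$ (nonempty because $u$ is non-isolated in $G-v$ by connectivity and $|V(G)| \geq 5$) and $\phi(v) \in L(v) \setminus \{\phi(u), c_0\}$ extends $\phi$: the new coloring is proper, $v$'s conflict-free condition is automatic, and $c_0$ stays unique at $N_G(u)$ because $\phi(v) \neq c_0$. If $d_G(v) = 2$ and $u_1 u_2 \in E(G)$, then for any such $\phi$ on $G - v$ we have $\phi(u_1) \neq \phi(u_2)$, and choosing $\phi(v) \in L(v) \setminus \{\phi(u_1), \phi(u_2), c_1, c_2\}$ with $c_i \in \mathcal{U}_\phi(u_i, G-v)$ works because $|L(v)| \geq 5$.

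The interesting case is $d_G(v) = 2$ with $u_1 u_2 \notin E(G)$: here nothing prevents a coloring of $G - v$ from giving $\phi(u_1) = \phi(u_2)$, which would make $v$'s conflict-free condition impossible. My plan is to instead apply minimality to the auxiliary graph $G' = (G - v) + u_1 u_2$, which has one fewer vertex than $G$, still has maximum degree at most $4$ (because $d_{G'}(u_i) = d_{G-v}(u_i)+1 = d_G(u_i) \leq 4$), and satisfies $|L(w)| \geq d_{G'}(w) + 3$ everywhere. The resulting coloring $\phi'$ now forces $\phi'(u_1) \neq \phi'(u_2)$, so I set $\phi = \phi'$ on $V(G) \setminus \{v\}$ and choose $\phi(v) \in L(v)$ to extend to $G$.

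The main obstacle is the conflict-free condition at $u_1$ and $u_2$ in $G$, since their neighborhoods differ in $G$ and $G'$ (the edge $u_1 u_2$ in $G'$ is replaced in $G$ by the path $u_1 v u_2$). Writing $T_i$ for the set of colors appearing exactly once in $\phi(N_{G-v}(u_i))$, a direct calculation shows $\mathcal{U}_\phi(u_i, G)$ is empty precisely when $T_i \subseteq \{\phi(v)\}$ and $\phi(v) \in \phi(N_{G-v}(u_i))$. Since $\Delta(G) \leq 4$ forces $d_{G-v}(u_i) \leq 3$, a brief case analysis by the multiplicity pattern on $N_{G-v}(u_i)$ shows at most one value of $\phi(v)$ is bad for each $u_i$: if $|T_i| \geq 2$ no color is bad, if $T_i$ is a singleton then only its unique element is bad, and if $T_i = \emptyset$ then every color in $\phi(N_{G-v}(u_i))$ has multiplicity at least $2$, which combined with $d_{G-v}(u_i) \leq 3$ leaves at most one such color. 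Forbidding $\phi(u_1), \phi(u_2)$ together with at most one bad color per $u_i$ rules out at most $4$ values, and $|L(v)| \geq 5$ guarantees a valid choice of $\phi(v)$. Since every $w \notin \{v, u_1, u_2\}$ has $N_G(w) = N_{G'}(w)$, its conflict-free condition in $G$ is inherited from $\phi'$, so the resulting $\phi$ is a proper conflict-free $L$-coloring of $G$, contradicting our assumption.
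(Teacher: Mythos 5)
Your proposal is correct and follows essentially the same route as the paper: delete a degree-$1$ vertex directly, and for a degree-$2$ vertex apply minimality to $(G-v)+u_1u_2$ so that $\phi(u_1)\neq\phi(u_2)$, then forbid at most four colors at $v$ (the two neighbor colors plus one ``protected''/bad color per $u_i$) out of a list of size at least $5$. Your case analysis via $T_i$ is just a slightly more explicit version of the paper's definition of $c_{u_i}$ as either a uniquely occurring color or the repeated color when $\mathcal{U}_{\phi}(u_i,G-v)=\emptyset$.
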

	
\begin{proof}
	If there is a vertex $v$ of degree $1$ in $G$ with the neighbor $u$, then by the minimality of $G$, there is a proper conflict-free $L$-coloring $\phi$ of $G-v$. We can extend $\phi$ to $G$ by setting $\phi(v) \in L(v)\setminus \{\phi(u),\alpha\}$ for $v$, where $\alpha \in \mathcal{U}_{\phi}(u,G)$. Thus have $\delta(G) \geq 2$ as $G$ is connected.
	
	Now we assume $v\in V(G)$ is a vertex of degree $2$ with neighbors $u_1$ and $u_2$.  Let $G'=(G-v)\cup\{u_1u_2\}$. As $\Delta(G') \leq 4$,  by the minimality of $G$, $G'$ admits a proper conflict-free $L$-coloring $\phi$. For each $i \in \{1,2\}$, we define $c_{u_i}$ as follows: If $|\mathcal{U}_{\phi}(u_i,G-v)|=0$, then all the neighbors of $u_i$ in $G-v$ receives the same color, say $\alpha_i$, in $\phi$, we let $c_{u_i}=\alpha_i$. Otherwise, $|\mathcal{U}_{\phi}(u_i,G-v)|\geq 1$, we choose $c_{u_i} \in \mathcal{U}_{\phi}(u_i,G-v)$. 
	We choose a color in $L(v)\setminus\{\phi(u_1), \phi(u_2), c_{u_1}, c_{u_2}\}$ as $\phi(v)$, which extends $\phi$ to $G$, a contradiction.
\end{proof}
	
Let $C=v_0v_1\cdots v_{k-1}v_0$ be a shortest cycle of $G$. Let $G'=G-V(C)$. For each $i\in [k]$, let $N_G(v_i)\setminus V(C)=\{u_i,w_i\}$ if $d_G(v_i)=4$, and let $N_G(v_i)\setminus V(C)=\{u_i\}$ if $d_G(v_i)=3$.
	
In the rest of the proof, we will first give an $L$-coloring (not necessarily proper conflict-free) $\phi$  of $G'$, and then we extend $\phi$ to a proper conflict-free $L$-coloring of $G$. For a coloring $\phi$ of $G'$,  a pair  of colors $(\alpha,\beta)$ and a vertex $v_i \in V(C)$, we say \emph{$v_i$ blocks $(\alpha,\beta)$ with respect to $\phi$} if either $d_{G}(v_i)=4$, $\alpha \neq \beta$ and $\phi(\{u_i,w_i\})=\{\alpha,\beta\}$, or $d_{G}(v_i)=3$ and $\phi(u_i)=\alpha = \beta$. For example, in the middle drawing of Fig.~\ref{fig-three}, $y$ blocks $(1,4)$ and $(4,1)$, $z$ blocks $(3,3)$. If $(\alpha,\beta)$ is not blocked by $v_i$, then we say $(\alpha,\beta)$ is \emph{feasible} for $v_i$. Observe that if $v_i$ blocks $(\alpha,\beta)$, then we can not extend $\phi$ to $G$ as $\phi(\{v_{i-1},v_{i+1}\})=\{\alpha,\beta\}$ because such an extension forces the vertex $v_i$ to have $\mathcal{U}_{\phi}(v_i,G)=\emptyset$.
By the definition, we have the following observation.
	
\begin{observation}\label{obs-block two}
	Every vertex blocks at most two color pairs. If $u$ blocks both $(\alpha_1,\beta_1)$ and $(\alpha_2,\beta_2)$, then they satisfy that $\alpha_1=\beta_2 \neq \alpha_2=\beta_1$.
\end{observation}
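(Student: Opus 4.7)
The plan is a direct case analysis on $d_G(v_i)$, which admits only the values $3$ or $4$ by Claim~\ref{cl:smallvertex} together with the fact that every $v_i \in V(C)$ has two neighbors on $C$.

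First I would handle $d_G(v_i) = 3$. In that case the definition only allows $v_i$ to block pairs $(\alpha,\beta)$ with $\alpha = \beta = \phi(u_i)$, so $v_i$ blocks the single pair $(\phi(u_i),\phi(u_i))$ and nothing else. In particular, no two distinct pairs can simultaneously be blocked, so the second assertion is vacuous in this case.

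Next I would treat $d_G(v_i) = 4$. Here blocking requires $\alpha \neq \beta$ and $\phi(\{u_i,w_i\}) = \{\alpha,\beta\}$. If $\phi(u_i) = \phi(w_i)$, then $\phi(\{u_i,w_i\})$ is a singleton and cannot equal any two-element set $\{\alpha,\beta\}$, so $v_i$ blocks nothing. Otherwise, write $\phi(u_i) = \alpha^\ast$ and $\phi(w_i) = \beta^\ast$ with $\alpha^\ast \neq \beta^\ast$. The only ordered pairs $(\alpha,\beta)$ with distinct entries and $\{\alpha,\beta\} = \{\alpha^\ast,\beta^\ast\}$ are $(\alpha^\ast,\beta^\ast)$ and $(\beta^\ast,\alpha^\ast)$, so $v_i$ blocks exactly these two pairs. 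Taking $(\alpha_1,\beta_1) = (\alpha^\ast,\beta^\ast)$ and $(\alpha_2,\beta_2) = (\beta^\ast,\alpha^\ast)$ (or vice versa) gives $\alpha_1 = \beta_2 = \alpha^\ast \neq \beta^\ast = \alpha_2 = \beta_1$, as required.

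There is no real obstacle here: the observation is essentially a symbolic unpacking of the definition of blocking, with the key point being that a two-element set $\{u_i,w_i\}$ corresponds to exactly two ordered presentations and therefore to at most two ordered color pairs. I would present the argument in just a few lines, since the only substantive combinatorial input (that $v_i$ has exactly two off-cycle neighbors if $d_G(v_i) = 4$ and exactly one if $d_G(v_i) = 3$) is already built into the notation set up before the observation.
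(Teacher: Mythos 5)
Your argument is correct and is exactly the definitional unpacking the paper has in mind: the authors state this observation without proof, remarking only that it follows from the definition of blocking, and your case analysis on $d_G(v_i)\in\{3,4\}$ (one blocked pair of the form $(\phi(u_i),\phi(u_i))$ in the degree-$3$ case; the two ordered presentations of $\{\phi(u_i),\phi(w_i)\}$, or none if these colors coincide, in the degree-$4$ case) is precisely that verification. Nothing is missing.
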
 
\begin{claim}\label{cl:triangle}
	$k \geq 4$, i.e., $G$ contains no triangle.
\end{claim}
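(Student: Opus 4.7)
The plan is to assume $k = 3$ for contradiction, so that $C = v_0 v_1 v_2 v_0$ is a triangle, and to extend a pre-existing coloring of $G' = G - V(C)$ to all of $G$, contradicting the minimality of $G$. Since $|L(v)| \geq d_G(v) + 3 \geq d_{G'}(v) + 3$ for every $v \in V(G')$, the minimality of $G$ yields a proper conflict-free $L$-coloring $\phi$ of $G'$. For each external neighbor $x$ of $V(C)$ that is non-isolated in $G'$, fix a witness color $c_x \in \mathcal{U}_\phi(x, G')$; if $x$ is isolated in $G'$, then Claim~\ref{cl:smallvertex} forces $x$ to be adjacent to all three triangle vertices, and such an $x$ will automatically see three pairwise distinct colors in $G$ once the extension is made.

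For each $i \in \{0,1,2\}$ set
\[
A_i = L(v_i) \setminus \{\phi(u_i), \phi(w_i), c_{u_i}, c_{w_i}\},
\]
with the $w_i$-terms omitted when $d_G(v_i) = 3$ and with each $c_x$-term omitted whenever $x$ is isolated in $G'$. A direct count gives
\[
|A_i| \geq d_G(v_i) + 3 - 2\bigl(d_G(v_i) - 2\bigr) = 7 - d_G(v_i) \geq 3.
\]
I would then search for pairwise distinct colors $a \in A_0$, $b \in A_1$, $c \in A_2$ such that the pairs $(b,c)$, $(a,c)$ and $(a,b)$ are feasible for $v_0$, $v_1$ and $v_2$ respectively. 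Setting $\phi(v_0) = a$, $\phi(v_1) = b$, $\phi(v_2) = c$ would then give a proper conflict-free $L$-coloring of $G$: properness follows from distinctness of $a, b, c$ together with the defining constraints of the $A_i$; feasibility of each pair forces a uniquely-appearing color in the neighborhood of the corresponding $v_i$; and removing each $c_x$ from the $A_i$'s preserves the uniqueness witness for every external neighbor.

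The existence of such a triple should be established by a case analysis. By Observation~\ref{obs-block two}, each degree-$4$ triangle vertex blocks at most two ordered pairs; and since $v_{i-1}$ and $v_{i+1}$ receive distinct colors in any proper coloring of a triangle, each degree-$3$ triangle vertex blocks no relevant pair. Hence at most six pairs altogether need to be avoided. A convenient opening move is to choose $a \in A_0 \setminus \bigl(\{\phi(u_1), \phi(w_1)\} \cup \{\phi(u_2), \phi(w_2)\}\bigr)$ whenever this set is non-empty, since then $(a,c)$ and $(a,b)$ are automatically feasible at $v_1$ and $v_2$, and the problem reduces to picking $(b,c) \in (A_1 \setminus \{a\}) \times (A_2 \setminus \{a\})$ with $b \neq c$ avoiding the at most two pairs blocked by $v_0$.

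The main obstacle is the tight regime in which every $v_i$ has degree $4$, the sets $A_i$ all attain their minimum size $3$, and the shortcut above fails, i.e.\ $A_0 \subseteq \{\phi(u_1), \phi(w_1)\} \cup \{\phi(u_2), \phi(w_2)\}$. In this regime one must exploit the forced coincidences among the colors $\phi(u_j), \phi(w_j)$ and the sets $A_0, A_1, A_2$; for instance, if both $\phi(u_0)$ and $\phi(w_0)$ lie in $A_1 \cap A_2$, the two pairs blocked by $v_0$ reduce the candidate space, but the equalities they force can be played against the symmetric constraints at $v_1$ and $v_2$. Handling these narrow sub-cases, together with the flexibility to reselect the witnesses $c_x$ when more than one unique color is available at $x$, should produce the desired triple and hence the desired contradiction.
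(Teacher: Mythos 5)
Your setup coincides with the paper's: assume $k=3$, take a proper conflict-free $L$-coloring $\phi$ of $G'=G-V(C)$ by minimality, reserve a witness color $c_x$ for each external neighbor $x$, and shrink each $L(v_i)$ to a list $A_i$ (the paper's $L_C(v_i)$) of size at least $3$ so that any proper coloring of the triangle from these lists in which every $v_i$ receives a feasible pair of neighbor-colors extends $\phi$ to all of $G$. The reduction, the count $|A_i|\geq 7-d_G(v_i)\geq 3$, and the treatment of vertices isolated in $G'$ are all correct and match the paper.

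The gap is that the existence of the required triple $(a,b,c)$ is exactly the hard part of the claim, and you do not prove it: the final paragraph ends with ``should produce the desired triple.'' This cannot be waved away by counting, because the naive bounds are tight. In the worst case $|A_0|=|A_1|=|A_2|=3$ with all three lists equal, there are exactly $3!=6$ admissible triples, and the three vertices block up to $6$ ordered pairs, each of which can kill one triple; so ``at most six pairs need to be avoided'' is not enough. The paper escapes this via a structural observation your write-up never isolates: if $v_i$ blocks $(\beta,\gamma)$ then $\{\beta,\gamma\}=\phi(\{u_i,w_i\})\subseteq\phi(N_{G'}(v_i))$, and these colors were \emph{removed} from $A_i$ when forming the reduced list; hence in the all-lists-equal case no blocked pair even lies in $A_{i-1}\times A_{i+1}$ and an arbitrary proper list coloring of the triangle works. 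In the remaining case ($L_C(v_0)\neq L_C(v_1)$, say) the paper extracts four distinct colors $1,2\in L_C(v_0)$, $3,4\in L_C(v_1)$, notes $v_2$ blocks at most one of the four cross pairs, and then runs a delicate chain of deductions forcing the adversary's hand. Your proposed ``shortcut'' (pick $a\in A_0$ avoiding $\phi(\{u_1,w_1\})\cup\phi(\{u_2,w_2\})$) also has a failure mode you do not flag: even when such an $a$ exists, $A_1\setminus\{a\}$ and $A_2\setminus\{a\}$ may both equal a common $2$-set $\{\alpha,\beta\}$ with $v_0$ blocking both $(\alpha,\beta)$ and $(\beta,\alpha)$, so the residual choice of $(b,c)$ with $b\neq c$ is impossible and one must restart with a different $a$. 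Until the ``tight regime'' and these sub-cases are actually worked out, the claim is not established.
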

\begin{proof}
	Assume $k=3$. By the minimality of $G$, $G'$ admits a proper $L$-coloring $\phi$ such that every non-isolated vertex $v$ of $G'$ satisfies $\mathcal{U}_{\phi}(v,G')\neq \emptyset$. For each vertex $u \in N_{G'}(V(C))$ which is non-isolated in $G'$, we choose $c_u \in \mathcal{U}_{\phi}(u,G')$ arbitrarily.
	Let $L_C$ be a list assignment of $C$ defined by $L_C(v_i)=L(v_i)\setminus (\{c_u\mid u\in N_{G'}(v_i)\} \cup \phi(N_{G'}(v_i))$ for each $i\in [3]$, and observe that $|L_C(v_i)|\geq 3$. Now it suffices to show that $\phi$ can be extended to a proper $L$-coloring $\phi$ of $G$ such that $\phi(v_i)\in L_C(v_i)$ and $\mathcal{U}_\phi(v_i,G)\neq \emptyset$ for each $i\in [3]$. Indeed, if that is possible, then for each isolated vertex $x$ of $G'$, we have $|\mathcal{U}_{\phi}(x,G)|=3$, and for each non-isolated vertex $y$ of $G'$, we have $|\mathcal{U}_{\phi}(y,G)| > 0$ because we do not use $c_u$ in $V(C)$ for each $u \in N_{G'}(V(C))$.  
  
  If $L_C(v_0)=L_C(v_1)=L_C(v_2)$, then we extend $\phi$ to $G$ by giving an arbitrary $L_C$-coloring of $C$. Note that for each $i \in [3]$, $\phi(v_{i-1})\neq \phi(v_{i+1})$ and $\phi(\{v_{i-1}, v_{i+1}\})\cap \phi(N_{G'}(v_i))=\emptyset$,
  so $\mathcal{U}_{\phi}(v_i,G)\neq \emptyset$, a contradiction. 
		
	Thus without loss of generality, we may assume $L_C(v_0)\neq L_C(v_1)$, and that there exists four distinct colors $1,2,3,4$ such that $\{1,2\} \subsetneq L_C(v_0)$ and $\{3,4\} \subsetneq L_C(v_1)$. By Observation \ref{obs-block two}, $v_2$ blocks at most one of $(1,3),(1,4),(2,3),(2,4)$, so without loss of generality, we may assume that $(1,4),(2,3),(2,4)$ are all feasible for $v_2$. Let $\alpha$ be a color in $L_C(v_2)\setminus \{1,4\}$, then we deduce that $\phi(\{u_0,w_0\})=\{1,\alpha\}$ or $\phi(\{u_1,w_1\})=\{4,\alpha\}$. For otherwise
	we can extend $\phi$ by setting $\phi(v_0)=1$, $\phi(v_1)=4$ and $\phi(v_2)=\alpha$. Without loss of generality, we may also assume that $\phi(\{u_1,w_1\})=\{1,\alpha\}$, and this implies that $\alpha \neq 3$ as $3 \in L_C(v_1)$. If $\phi(\{u_0,w_0\})=\{4,\alpha\}$, then $\alpha \neq 2$, and thus we extend $\phi$ by setting $\phi(v_0)=2$, $\phi(v_1)=3$ and $\phi(v_2)=\alpha$, a contradiction. Thus we have $\phi(\{u_0,w_0\}) \neq \{4,\alpha\}$. If $\alpha \neq 2$, then we can extend $\phi$ by setting $\phi(v_0)=2$, $\phi(v_1)=4$ and $\phi(v_2)=\alpha$, a contradiction. Therefore, we have that $\alpha=2$. Let $\beta \in L_C(v_0) \setminus \{1,2\}$. Clearly, there exists a color $\gamma \in L_C(v_1) \setminus \{\beta\}$ such that $(\beta,\gamma)$ is feasible for $v_2$ and we know that $\gamma \neq 2$ as $2 \in \phi(\{u_1,w_1\})$, we then extend $\phi$ by setting $\phi(v_0)=\beta$, $\phi(v_1)=\gamma$, and $\phi(v_2)=2$.  See the leftmost drawing on Fig.~\ref{fig-three} for illustration.
\end{proof}
	
Now we observe that \emph{$G'$ has no isolated vertices.} Indeed, if $u$ is an isolated vertex of $G'$, then since $\delta(G)\geq 3$ by Claim \ref{cl:triangle}, there exist vertices $v_a, v_b, v_c\in V(C)\cap N_G(u)$ with $0\leq a<b<c\leq k-1$.
By Claim~\ref{cl:triangle}, $b-a \geq 2$ and $c-b \geq 2$. Thus $v_auv_cv_{c+1}\cdots v_{a-1}v_a$ is a shorter cycle than $C$, a contradiction.
	
Let $G''$ be the graph obtained from $G'$ by adding all the edges  $u_iw_i$ for each $i \in [k]$ if $w_i$ exists, and removing the multiple edges. 
Clearly, $\Delta(G'') \leq 4$, and by the minimality of $G$, $G''$ has a proper conflict-free $L$-coloring $\phi$ with $\phi(u_i)\neq \phi(w_i)$ for every $i\in [k]$ if $d_G(v_i)=4$. Note that for each $u\in N_{G}(V(C))$, $\mathcal{U}_{\phi}(u,G'')\neq \emptyset$, but it is possible that $\mathcal{U}_{\phi}(u,G')= \emptyset$ which holds only if $|\phi(N_{G'}(u))|=1$ (the vertex $x$ in the second drawing on  Fig.~\ref{fig-three} is a such example).  For each $u\in N_{G}(V(C))$, we define a color $c_u$ as follows (see again the second drawing on  Fig.~\ref{fig-three} for illustration): If $\mathcal{U}_{\phi}(u,G') \neq \emptyset$, then let $c_u$ be an arbitrarily color in it, otherwise, let $c_u$ be the unique color in $\phi(N_{G'}(u))$.
	
Let $L_C$ be a list assignment of $C$ with $L_C(v_i)=L(v_i)\setminus (\{c_u \mid u\in N_{G'}(v_i)\} \cup \phi(N_{G'}(v_i)))$ for each $i\in [k]$. Observe that $|L_C(v_i)|\geq 3$, moreover $|L_C(v_i)|\geq 4$ if $d_{G}(v_i)=3$. In the rest of the proof, we will extend $\phi$ to a proper $L$-coloring of $G$ by choosing a color from $L_C(v_i)$ for each $i \in [k]$. We observe that after doing this, for each $u \in V(G) \setminus V(C)$, there will be one color that appears exactly once at the neighborhood of $v$ in $G$. Indeed, this is clearly true if $u \in V(G) \setminus (V(C) \cup N_{G'}(C))$. Suppose $u\in N_{G'}(v_i)$ for some $i\in [k]$. If $\mathcal{U}_{\phi}(u,G') = \emptyset$, then the color we chose for $v_i$ appears exactly once at the neighborhood of $u$,  otherwise, $c_u$ does. Therefore, it suffices to give a proper $L_C$-coloring of $C$ such that in the resulting coloring of $G$, for each vertex in $C$, some color appears exactly once at its neighborhood.
	
	\begin{figure}[h]
		\begin{minipage}{0.29\textwidth} 
			\centering
			\begin{tikzpicture}[>=latex,	
				roundnode/.style={circle, draw=black,fill= white, minimum size=1.3mm, inner sep=0.1pt}]
				\node[roundnode]  (v1) at (30:0.7) {};
				\node[roundnode]  (v0) at (150:0.7) {};
				\node[roundnode]  (v2) at (270:0.7) {};
				
				\draw (v0)--(v1)--(v2)--(v0);
				
				\draw (v1) to (15:1.4);
				\draw (v1) to (0:1.2);
				
				\node at (15:1.6) {1};
				\node at (-5:1.35) {$\alpha$};
				
				\draw (v0) to (165:1.4);
				\draw (v0) to (180:1.2); 
				
				\draw [rounded corners] (-0.8,0.5) rectangle (-0.4,1.8);
				\node at (-0.6,0.7) {$1$};
				\node at (-0.6,1.1) {$2$};
				\node at (-0.6,1.5) {$\beta$};
				
				\node [rotate = -50] at (-1.5,0) {$\neq \{4,\alpha\}$};
				
				\draw [rounded corners] (0.4,0.5) rectangle (0.8,1.5);
				\node at (0.6,0.7) {$3$};
				\node at (0.6,1.1) {$4$};
				
				\draw[dashed] (-0.5,0.7) -- (0.5,0.7);
				
				\draw (v2) to (-100:1.4);
				\draw (v2) to (-80:1.4);
				
				\node at (30:0.3) {$v_1$};
				\node at (175:0.65) {$v_0$};
				\node at (250:0.75) {$v_2$};
				\node at (315:0.9) {$\alpha=2$};
			\end{tikzpicture} 
			\label{fig-triangle}
		\end{minipage}
		\begin{minipage}{0.34\textwidth} 
			\centering
			\begin{tikzpicture}[>=latex,	
				roundnode/.style={circle, draw=black,fill= white, minimum size=1.3mm, inner sep=0.1pt},
				bluenode/.style={circle, draw=black,fill=blue, minimum size=1.5mm, inner sep=0pt},
				rednode/.style={circle, draw=black,fill=red, minimum size=1.5mm, inner sep=1pt}, 
				greennode/.style={circle, draw=black,fill=darkgreen, minimum size=1.5mm, inner sep=1pt},
				violetnode/.style={circle, draw=black,fill=violet, minimum size=1.5mm, inner sep=1pt},
				yellownode/.style={circle, draw=black,fill=yellow, minimum size=1.5mm, inner sep=1pt}]
				
				\draw circle(0.7); 
				
				\node[roundnode]  (A1) at (-20:0.7) {};
				\node[rednode]  (A2) at (-20:1.4) {};
				\filldraw [draw=white, fill=gray!70, opacity=0.5] (1.32,-0.4) rectangle (1.52,0);
				\node at (-10:1.3) {$2~3$}; 
				\node [bluenode]  (A3) at (-10:2.1) {};
				\node at (-9:2.35) {1};
				\node [bluenode] (A4) at (-20:2.1) {};
				\node at (-19:2.35) {1};
				\node [greennode] (A5) at (-30:2.1) {};
				\node at (-30:2.35) {3};
				\foreach \t in {1,3,4,5}{
					\draw (A2)--(A\t);} 
				
				\node [roundnode]  (B1) at (30:0.7) {};
				\node [greennode]  (B2) at (30:1.4) {};
				\filldraw [draw=white, fill=gray!70, opacity=0.5] (1.35,0.3) rectangle (1.55,0.7);
				\node at (19:1.4) {$3~1$}; 
				
				\node  [bluenode]  (B3) at (35:2.1) {};
				\node  at (34:2.35) {1};
				
				\node [rednode] (B4) at (25:2.1) {}; 
				\node at (25:2.35) {2}; 
				\foreach \t in {1,3,4}{
					\draw (B2)--(B\t);} 
				
				\node[roundnode]  (C0) at (135:0.7) {};	
				\node [rednode]  (C1) at (125:1.4) {};
				\filldraw [draw=white, fill=gray!70, opacity=0.5] (-0.4,1) rectangle (-0.2,1.4);
				\node  at (110:1.3) {$2~4$};
				
				\node [bluenode]  (C2) at (145:1.4) {};
				\node at (138:1.6) {$x$};
				\filldraw [draw=white, fill=gray!70, opacity=0.5] (-1.18,0.3) rectangle (-0.98,0.7);
				\node at (158:1.3) {$1~3$};
				\node  [bluenode] (C3) at (110:2) {};
				\node at (109:2.25) {1};
				\node   [bluenode] (C4) at (118:2.1) {}; 
				\node at (117:2.35) {1};
				\node [yellownode] (C5) at (128:2.1) {};
				\node at (128:2.35) {4};
				\node [greennode]  (C6) at (140:2.1) {};
				\node  at (140:2.35) {3}; 
				\node [greennode]  (C7) at (150:2.1) {}; 
				\node  at (150:2.35) {3};
				\node [greennode]  (C8) at (158:2) {}; 
				\node  at (158:2.25) {3};
				\foreach \t in {0,3,4,5}{
					\draw (C1)--(C\t);}  
				\foreach \t in {0,6,7,8}{
					\draw (C2)--(C\t);}  
				\draw [dashed] (C1)--(C2);

				\node[roundnode]  (D0) at (210:0.7) {};	
				\node[bluenode]  (D1) at (200:1.4) {};
				\filldraw [draw=white, fill=gray!70, opacity=0.5] (-1.25,-0.4) rectangle (-1.05,0);
				\node at (190:1.3) {$1~5$};
				\node[yellownode]  (D2) at (220:1.4) {};
				\filldraw [draw=white, fill=gray!70, opacity=0.5] (-0.75,-1.2) rectangle (-0.55,-0.8);
				\node  at (233:1.3) {$4~1$};
				\node [greennode] (D3) at (190:2) {};
				\node at (187:2.25) {3};
				\node [violetnode]  (D4) at (200:2) {}; 
				\node at (200:2.25) {5}; 
				\node [rednode] (D5) at (215:2.1) {}; 
				\node at (215:2.35) {2}; 
				\node [rednode] (D6) at (225:2.1) {}; 
				\node at (225:2.35) {2}; 
				\foreach \t in {0,3,4}{
					\draw (D1)--(D\t);}  
				\foreach \t in {0,5,6}{
					\draw (D2)--(D\t);}  
				\draw (D1)--(D2);
				
				\node at  (90:0.45) {$C$};	
				
				\node at (210:0.45) {$y$};
				\node at (35:0.45) {$z$};
			\end{tikzpicture} 
		\end{minipage}
		\begin{minipage}{0.29\textwidth} 
			\centering
			\begin{tikzpicture}[>=latex,	
				roundnode/.style={circle, draw=black,fill= white, minimum size=1.3mm, inner sep=0.1pt}]
				\node[roundnode]  (v1) at (45:0.7) {};
				\node[roundnode]  (v0) at (135:0.7) {};
				\node[roundnode]  (v3) at (225:0.7) {};
				\node[roundnode]  (v2) at (315:0.7) {};
				
				\draw (v0)--(v1)--(v2)--(v3)--(v0);
				
				\draw (v1) to (20:1.4);
				\draw (v1) to (10:1.2);

				\draw (v0) to (165:1.4);
				\node at (168:1.6){2};
				\draw (v3) to (195:1.4);
				\node at (192:1.6){1};
				
				\draw [rounded corners] (-0.7,-0.6) rectangle (-0.3,-1.4);
				\node at (-0.5,-0.8) {$2$};  
				
				\draw [rounded corners] (-0.7,0.6) rectangle (-0.3,1.6);
				\node at (-0.5,0.8) {$1$};
				\node at (-0.5,1.2) {$\alpha$}; 
				\node at (-0.3,2){$\alpha \notin L_C(v_1)$};
				
				\draw [rounded corners] (0.3,0.6) rectangle (0.7,1.6);
				\node at (0.5,0.8) {$1$};
				\node at (0.5,1.2) {$2$};
				
				\draw (v2) to (-10:1.2);
				\draw (v2) to (-20:1.4);
				\draw [rounded corners] (0.3,-0.6) rectangle (0.7,-1.4);
				\node at (0.5,-0.8) {$1$};
				\node at (0.5,-1.2) {$2$};

				\node at (45:0.4) {$v_1$};
				\node at (162:0.75) {$v_0$};
				\node at (198:0.75) {$v_3$};
				\node at (315:0.4) {$v_2$};
			\end{tikzpicture}
			\label{fig-four cycle}
		\end{minipage}
		\caption{The left drawing is an illustration for Claim \ref{cl:triangle}. In the middle drawing, for each vertex $u$, a number without shading represents $\phi(u)$, and a number with shading represents the color chosen as $c_u$.
        The right drawing is an illustration for Claim \ref{cl:4-cycle}. }
		\label{fig-three}
	\end{figure}
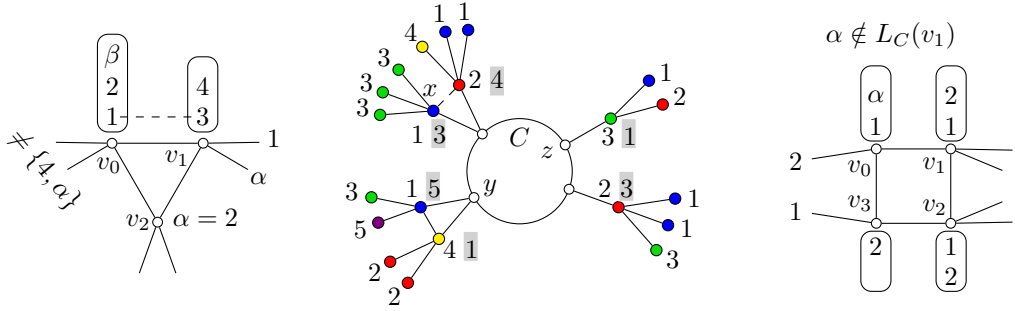
	 
Now we have the following two observations which we use.

\begin{observation}\label{obs-block}
	Let $C=xzywx$, $L'_C(x) \subseteq L_C(x)$ and $L'_C(y) \subseteq L_C(y)$ be two non-empty sets. If $|L'_C(x)|+|L'_C(y)|\geq 4$, then there exists a pair of colors in $L'_C(x) \times L'_C(y)$ which is feasible for both $z$ and $w$.
\end{observation}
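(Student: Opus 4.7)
The plan is to establish the existence of a feasible pair by combining a pigeonhole count with the structural information provided by Observation~\ref{obs-block two}. Setting $m = |L'_C(x)|$ and $n = |L'_C(y)|$, the candidate set $L'_C(x) \times L'_C(y)$ has size $mn$, which is at least $3$ since $m, n \geq 1$ and $m + n \geq 4$. By Observation~\ref{obs-block two}, each of the two vertices $z$ and $w$ blocks at most two color pairs in total, so the number of blocked pairs is at most $4$. The task is therefore to rule out the pathological case where the candidate set is almost entirely swallowed by blocked pairs.

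First, I would extract the key structural consequence of Observation~\ref{obs-block two}: if a single vertex blocks two distinct pairs, both pairs have the form $(\gamma, \delta)$ and $(\delta, \gamma)$ with $\gamma \neq \delta$, so both coordinates necessarily lie in $L'_C(x) \cap L'_C(y)$. Consequently, whenever $|L'_C(x) \cap L'_C(y)| \leq 1$ — which is automatic as soon as $\min(m, n) = 1$ — neither $z$ nor $w$ can block more than one pair from our candidate set, giving at most two blocked pairs out of $mn \geq 3$ candidates, and a feasible pair exists.

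Next I would reduce to $m, n \geq 2$ with $|L'_C(x) \cap L'_C(y)| \geq 2$. Here $mn \geq 4$, and the subcase $mn \geq 5$ is immediate from the global bound of four blocked pairs. The only genuinely tight subcase is $m = n = 2$ with $L'_C(x) = L'_C(y) = \{a, b\}$, where the four candidate pairs split into the two diagonal pairs $(a,a), (b,b)$ and the two off-diagonal pairs $(a,b), (b,a)$. I would then exploit a second dichotomy from the definition of blocking: a diagonal pair $(c, c)$ can be blocked only by a vertex of degree $3$ in $G$ (one whose unique outside-$C$ neighbor receives color $c$), while an off-diagonal pair can be blocked only by a vertex of degree $4$ in $G$, and such a vertex blocks the two off-diagonal pairs \emph{together}. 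A short enumeration over the possible blocking behaviors of $z$ and $w$ (each of degree $3$ or $4$, each targeting either the diagonal or the off-diagonal) then shows that in every subcase at least one of the four candidate pairs survives.

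The main obstacle is exactly this tight subcase $m = n = 2$ with $L'_C(x) = L'_C(y)$: the bound of four blocked pairs matches the number of candidates, so pigeonhole alone is insufficient and one must exploit the rigid structure Observation~\ref{obs-block two} imposes on which pairs a given vertex is capable of blocking — specifically, the separation between degree-$3$ vertices (blocking only diagonals) and degree-$4$ vertices (blocking only swap-pairs) — to conclude that the blocked sets cannot cover all four candidates simultaneously.
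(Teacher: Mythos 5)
Your proof is correct and rests on the same mechanism as the paper's: Observation~\ref{obs-block two} forces a vertex that blocks two candidate pairs to block a swap pair $(\alpha,\beta),(\beta,\alpha)$ with both colors in $L'_C(x)\cap L'_C(y)$, which leaves the diagonal candidates $(\alpha,\alpha),(\beta,\beta)$ available and the other vertex can eliminate at most one of them; otherwise each of $z,w$ blocks at most one of the at least three candidates. The paper packages this as a single symmetry argument rather than your explicit case split on $|L'_C(x)\cap L'_C(y)|$ and on the degree-$3$/degree-$4$ dichotomy in the tight $2\times 2$ case, but the content is the same.
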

\begin{proof}
	If $z$ blocks two color pairs $(\alpha_1,\beta_1),(\alpha_2,\beta_2) \in L'_C(x) \times L'_C(y)$, then by Observation \ref{obs-block two}, we have $\alpha_1=\beta_2 \neq \alpha_2=\beta_1$, and $\{\alpha_1,\alpha_2\} \subseteq  L'_C(x) \cap  L'_C(y)$. Thus $(\alpha_1,\alpha_1)$ or $(\alpha_2,\alpha_2)$ is not blocked by $w$. Hence by the symmetry of $z$ and $w$, we may assume that both $z$ and $w$ blocks at most one color pair in $L'_C(x) \times L'_C(y)$, but note that the condition $|L'_C(x)|+|L'_C(y)|\geq 4$ implies $|L'_C(x)|\geq 3$, or $|L'_C(y)|\geq 3$ or $\min\{|L'_C(x)|, |L'_C(y)|\}\geq 2$. In any case, $L'_C(x)\times L'_C(y)$ contains at least three pairs, thus there is a desired pair in $L'_C(x)\times L'_C(y)$.
\end{proof}
	
\begin{observation}\label{obs-extend}
	Let $C=xzywx$. If there is a color pair $(\alpha,\beta) \in L_C(x)\times L_C(y)$ which is feasible for both $z$ and $w$ (possibly, $\alpha=\beta$), and a color pair in  $L_C(z)\setminus \{\alpha,\beta\} \times L_C(w) \setminus \{\alpha,\beta\}$ which is feasible for both $x$ and $y$, then $\phi$ can be extended to a  proper conflict-free $L$-coloring of $G$.
\end{observation}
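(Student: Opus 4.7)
The plan is to set $\phi(x)=\alpha$, $\phi(y)=\beta$, $\phi(z)=\gamma$, $\phi(w)=\delta$ using the two feasible color pairs supplied by the hypothesis, and then verify that this gives a proper $L_C$-coloring of $C$ in which $\mathcal{U}_\phi(v,G)\neq\emptyset$ for every $v\in V(C)$. By the reduction already carried out in the paragraph preceding the observation, this is all that is needed to extend $\phi$ to a proper conflict-free $L$-coloring of the whole of $G$.

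Properness on $C=xzywx$ reduces to $\gamma\notin\{\alpha,\beta\}$ and $\delta\notin\{\alpha,\beta\}$, which is built into the assumption $(\gamma,\delta)\in(L_C(z)\setminus\{\alpha,\beta\})\times(L_C(w)\setminus\{\alpha,\beta\})$; properness on the edges leaving $C$ is automatic, since the definition of $L_C$ excluded $\phi(u)$ for every external neighbor $u$ of each $v_i\in V(C)$. For the conflict-free condition at the $C$-vertices, I would observe that the two $C$-neighbors of $z$ are $x,y$ (colored $\alpha,\beta$), the two $C$-neighbors of $w$ are $y,x$ (colored $\beta,\alpha$), and symmetrically the two $C$-neighbors of each of $x,y$ are $z,w$ (colored $\gamma,\delta$). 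Because the blocking relation depends only on the unordered pair of colors involved, in both the $d_G(v_i)=4$ clause and the $d_G(v_i)=3$ clause of the definition, the feasibility of $(\alpha,\beta)$ for $z$ and $w$ yields $\mathcal{U}_\phi(z,G),\mathcal{U}_\phi(w,G)\neq\emptyset$, and the feasibility of $(\gamma,\delta)$ for $x$ and $y$ handles the remaining two vertices.

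Taken together, the argument is a direct unpacking of the definition of feasibility combined with the reduction already in place before the observation, so no substantive obstacle arises. The single point that merits explicit mention is the symmetry of the blocking relation: it is precisely this symmetry which allows the single feasible pair $(\alpha,\beta)$ to serve both for $z$ and for $w$ even though the cyclic order of the $C$-neighbors of $w$ is the reverse of that of $z$, and likewise for $(\gamma,\delta)$ with respect to $x$ and $y$.
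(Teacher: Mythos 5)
Your proof is correct and is exactly the unpacking the paper intends: the paper states this observation without proof, treating it as immediate from the definition of feasibility together with the reduction established in the paragraph preceding Observation~\ref{obs-block}. The one ingredient you use implicitly rather than explicitly is that $\phi(u_i)\neq\phi(w_i)$ whenever $d_G(v_i)=4$ (guaranteed by the construction of $G''$), which is what makes ``not blocked'' actually yield $\mathcal{U}_\phi(v_i,G)\neq\emptyset$ rather than only the converse direction noted earlier in the text.
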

	
\begin{claim}\label{cl:4-cycle}
	$k\geq 5$, i.e., $G$ contains no 4-cycle.
\end{claim}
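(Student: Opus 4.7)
The plan is to derive a contradiction by extending the fixed coloring $\phi$ of $G''$ to a proper conflict-free $L$-coloring of $G$, using Observation~\ref{obs-extend} applied to the $4$-cycle $C=v_0v_1v_2v_3v_0$. Concretely, I will seek $(\alpha,\beta)\in L_C(v_0)\times L_C(v_2)$ feasible for both $v_1$ and $v_3$, together with $(\gamma,\delta)\in(L_C(v_1)\setminus\{\alpha,\beta\})\times(L_C(v_3)\setminus\{\alpha,\beta\})$ feasible for both $v_0$ and $v_2$. The basic tool for each step is Observation~\ref{obs-block}, which requires the two relevant list-sizes to sum to at least $4$. Recall that $|L_C(v_i)|\ge 3$ always, with $|L_C(v_i)|\ge 4$ whenever $d_G(v_i)=3$.

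The easy case is when some opposite pair already has both lists of size $\ge 4$. Say $|L_C(v_1)|,|L_C(v_3)|\ge 4$: Observation~\ref{obs-block} applied on the $(v_0,v_2)$ side yields $(\alpha,\beta)$, and after removing at most two colors one has $|L_C(v_1)\setminus\{\alpha,\beta\}|+|L_C(v_3)\setminus\{\alpha,\beta\}|\ge 2+2=4$, so a second application of Observation~\ref{obs-block} (with the roles of $x,z,y,w$ cyclically permuted to $v_1,v_0,v_3,v_2$) supplies $(\gamma,\delta)$, and Observation~\ref{obs-extend} concludes. The mirror case $|L_C(v_0)|,|L_C(v_2)|\ge 4$ is handled symmetrically by starting from the opposite side.

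This reduces the problem to the hard case in which each opposite pair contains at least one vertex with $|L_C|=3$, and therefore of degree $4$ in $G$. After relabeling I may assume $|L_C(v_0)|=|L_C(v_1)|=3$, so $d_G(v_0)=d_G(v_1)=4$. The decisive structural observation is that $v_0$ and $v_2$ are non-adjacent in $C$, so $\alpha=\beta$ is permitted. When $L_C(v_0)\cap L_C(v_2)\ne\emptyset$, choosing a diagonal $\alpha=\beta$ in the intersection leaves $|L_C(v_1)\setminus\{\alpha\}|+|L_C(v_3)\setminus\{\alpha\}|\ge 2+2=4$, and Observation~\ref{obs-block} closes the argument, provided one verifies that $(\alpha,\alpha)$ avoids the blocks of $v_1$ and $v_3$: by the definition of blocking, a degree-$4$ vertex blocks no diagonal pair, while a degree-$3$ vertex $v_i$ blocks only the single pair $(\phi(u_i),\phi(u_i))$ but comes with $|L_C(v_i)|\ge 4$, so a suitable $\alpha$ can always be found. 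When $L_C(v_0)\cap L_C(v_2)=\emptyset$, the diagonal trick fails and I fall back on a counting/exchange argument: by Observation~\ref{obs-block two} the vertices $v_1$ and $v_3$ together block at most $4$ of the $\ge 9$ pairs in $L_C(v_0)\times L_C(v_2)$, leaving $\ge 5$ feasible candidates for $(\alpha,\beta)$; for each, the (typically unique) remaining $(\gamma,\delta)\in(L_C(v_1)\setminus\{\alpha,\beta\})\times(L_C(v_3)\setminus\{\alpha,\beta\})$ is then tested against the blocks of $v_0$ and $v_2$, and the few pathological configurations in which every candidate is simultaneously blocked (analogous to the one depicted on the right of Fig.~\ref{fig-three}) are ruled out by a direct case analysis in the spirit of the triangle case. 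This last sub-case, where every list is as small as possible, the opposite lists are disjoint, and the bound in Observation~\ref{obs-block} is tight, is the step I expect to be the main obstacle.
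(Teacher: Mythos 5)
Your overall strategy is the right one -- a double application of Observation~\ref{obs-block} fed into Observation~\ref{obs-extend}, with the genuine difficulty isolated in the configuration where the relevant lists have size $3$ and the bound $|L'_C(x)|+|L'_C(y)|\geq 4$ becomes tight. That is indeed where the paper's proof spends all of its effort. But your write-up does not actually prove the claim in that configuration: you end by saying that the ``few pathological configurations in which every candidate is simultaneously blocked'' are ``ruled out by a direct case analysis in the spirit of the triangle case,'' and you yourself flag this as the expected main obstacle. That deferred analysis \emph{is} the proof. Concretely, when $|L_C(v_1)|=|L_C(v_3)|=3$ and the chosen $(\alpha,\beta)\in L_C(v_0)\times L_C(v_2)$ happens to satisfy $|L_C(v_1)\setminus\{\alpha,\beta\}|=|L_C(v_3)\setminus\{\alpha,\beta\}|=1$, Observation~\ref{obs-block} is unavailable and the unique remaining pair may genuinely be blocked; ruling this out requires the chain of deductions the paper carries out (each failed extension forces some $v_i$ to have $d_G(v_i)=3$ with $\phi(u_i)$ equal to a specific color, which by the definition of $L_C$ removes that color from $L_C(v_i)$, which in turn yields a color in $L_C(v_0)\setminus L_C(v_1)$ and a new feasible pair giving the final contradiction). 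None of this is present in your argument, so the claim is not established.

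There is also a smaller but real flaw in your sub-case $L_C(v_0)\cap L_C(v_2)\neq\emptyset$. You need a color $\alpha$ in that intersection such that $(\alpha,\alpha)$ is feasible for both $v_1$ and $v_3$. Your justification -- that a degree-$3$ vertex $v_i$ blocks only $(\phi(u_i),\phi(u_i))$ ``but comes with $|L_C(v_i)|\geq 4$, so a suitable $\alpha$ can always be found'' -- is a non sequitur: the candidates for $\alpha$ live in $L_C(v_0)\cap L_C(v_2)$, and enlarging $L_C(v_3)$ does not enlarge that intersection. If $L_C(v_0)\cap L_C(v_2)=\{\phi(u_3)\}$ with $d_G(v_3)=3$, the diagonal trick fails outright and you are thrown back into the unresolved hard case. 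So both branches of your ``hard case'' ultimately bottom out in the same missing argument.
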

\begin{proof} 
	Suppose to the contrary, $k=4$. By Observation \ref{obs-block two} and the fact that $L_C(v_1) \times L_C(v_3)$ contains at least six pair of distinct colors, there must be a pair of distinct colors, say $(1,2) \in L_C(v_1) \times L_C(v_3)$, which is feasible for both $v_0$ and $v_2$. 
  By Observations~\ref{obs-block}, \ref{obs-extend} and the assumption that $|L_{C}(v_{i})| \geq 3$, we have $1 \leq |L_C(v_i)\setminus \{1,2\}| \leq 2$  for $i \in \{0,2\}$, 
  and at least one of $|L_C(v_0)\setminus \{1,2\}|$ and $|L_C(v_2)\setminus \{1,2\}|$ is equal to 1.
  Without loss of generality, we may assume that $1\in L_C(v_0)$ and $\{1,2\} \subsetneq L_C(v_2)$. We deduce that $d_{G}(v_3)=3$ and $\phi(u_3)=1$, for otherwise $(1,1) \in L_C(v_0) \times L_C(v_2)$ is feasible for both $v_1$ and $v_3$ and $|L_C(v_1)\setminus \{1\}|+|L_C(v_3)\setminus \{1\}| \geq 4$, a contradiction by Observation \ref{obs-extend}. Thus $1 \notin L_C(v_3)$. Observe that $(1,2)$ is feasible for both $v_1$ and $v_3$, this implies that $2 \in L_C(v_1)$ and $|L_C(v_1)| = 3$ again by Observation \ref{obs-extend}. So we deduce that $d_{G}(v_0)=3$ and $\phi(u_0)=2$ as $(2,2) \in L_C(v_1)\times L_C(v_3)$ must be blocked by $v_0$.
  Therefore, $2 \notin L_C(v_0)$, but recall that $2 \in L_C(v_1)$ and $|L_C(v_1)|=3$, so there exists a color $\alpha \in L_C(v_0)\setminus L_C(v_1)$. Then $(\alpha,1) \in L_C(v_0)\times L_C(v_2)$ is feasible for both $v_1$ and $v_3$, and $|L_C(v_1)\setminus \{\alpha,1\}|+|L_C(v_3)\setminus \{\alpha,1\}| \geq 2+2 \geq 4$, a contradiction by Observation \ref{obs-extend}. See the rightmost drawing in Fig.~\ref{fig-three} for an illustration.
\end{proof}
	
By Claims~\ref{cl:triangle} and \ref{cl:4-cycle}, $G$ has no triangles and $4$-cycles. Recall that by the assumption, $C$ is a shortest cycle, so we have $N_{G'}(v_i) \cap N_{G'}(v_j) = \emptyset$ for any distinct $i,j\in [k]$.

We construct an auxiliary graph $H$ with $V(H)=\bigcup_{i=1}^k X_i$ where $X_i=\{v_i\}\times L_C(v_i)$, and $E(H)= E_0 \cup E_1\cup E_2$, where 
\begin{align*}
	E_0 & =  \{(v_i,\alpha)(v_i,\beta)\mid \alpha,\beta \in L_C(v_i),\alpha \neq \beta,i\in [k]\},\\
	E_1 & =  \{(v_i, \alpha)(v_{i+1}, \alpha)\mid \alpha \in L_C(v_i)\cap L_C(v_{i+1}), i \in [k]\}, \text{and}\\
	E_2 & =  \{(v_i, \alpha)(v_{i+2}, \beta)\mid (\alpha,\beta) \in L_C(v_i)\times L_C(v_{i+2}) \text{ is blocked by } v_{i+1}, i \in [k]\}.
\end{align*} 
The edges in $E_1$ (resp. $E_2$) are called \emph{short} (resp. \emph{long}) edges. The edges incident with $(v_i,\alpha) \in X_i$ with other endpoints in $X_{i+1}\cup X_{i+2}$ (resp. $X_{i-1}\cup X_{i-2}$) are called \emph{forward} (resp. \emph{backward}) edges of $(v_i,\alpha)$. See Fig.~\ref{fig-H} for illustration.
	
	\begin{figure}[h]
		\centering  
		\begin{tikzpicture}[>=latex,
			roundnode/.style={circle, draw=black,fill=white, minimum size=1.5mm, inner sep=0pt},
			rednode/.style={circle, draw=black,fill=red, minimum size=1.5mm, inner sep=0.2pt},
			cyannode/.style={circle, draw=black,fill=cyan, minimum size=1.5mm, inner sep=0pt},
			greennode/.style={circle, draw=black,fill=darkgreen, minimum size=1.5mm, inner sep=0pt},
			violetnode/.style={circle, draw=black,fill=violet, minimum size=1.5mm, inner sep=0pt},
			orangenode/.style={circle, draw=black,fill=yellow, minimum size=1.5mm, inner sep=0pt},
			bluenode/.style={circle, draw=black,fill=blue, minimum size=1.5mm, inner sep=0pt}]
			
			\filldraw [draw=white, fill=gray!50, opacity=0.5, rounded corners] (1.75,1.8) rectangle (2.25,7.2);
			\filldraw [draw=white, fill=gray!50, opacity=0.5, rounded corners] (3.25,1.8) rectangle (3.75,7.2);
			\filldraw [draw=white, fill=gray!50, opacity=0.5, rounded corners] (4.75,1.8) rectangle (5.25,7.2);
			\filldraw [draw=white, fill=gray!50, opacity=0.5, rounded corners] (6.25,1.8) rectangle (6.75,7.2);
			\filldraw [draw=white, fill=gray!50, opacity=0.5, rounded corners] (7.75,1.8) rectangle (8.25,7.2);
			\filldraw [draw=white, fill=gray!50, opacity=0.5, rounded corners] (9.25,1.8) rectangle (9.75,7.2);
			
			\draw (2,8.5)--(9.5,8.5);
			\draw [dashed] (2,8.5)--(1,8.5);
			\draw [dashed] (9.5,8.5)--(10.5,8.5);

			\node [roundnode] (vl) at (1,8.5){};
			\node [roundnode] (v0) at (2,8.5){}; 
			\node [roundnode] (v1) at (3.5,8.5){};
			\node [roundnode] (v2) at (5,8.5){};
			\node [roundnode] (v3) at (6.5,8.5){};
			\node [roundnode] (v4) at (8,8.5){}; 
			\node [roundnode] (v5) at (9.5,8.5){};
			\node [roundnode] (vr) at (10.5,8.5){};
			
			\draw (vl) ..controls (0,10.3) and (11.5,10.3).. (vr);
			\draw (vl) -- (0.2,8.3);
			\draw (vr) -- (11.3,8.3);
			\draw (vr) -- (11.3,8.8);
			
			\filldraw [draw=white, fill=red] (0,6.85) rectangle (0.6,7.15);`
			\filldraw [draw=white, fill=cyan] (0,5.85) rectangle (0.6,6.15);
			\filldraw [draw=white, fill=darkgreen] (0,4.85) rectangle (0.6,5.15);
			\filldraw [draw=white, fill=violet] (0,3.85) rectangle (0.6,4.15);
			\filldraw [draw=white, fill=yellow] (0,2.85) rectangle (0.6,3.15);
			\filldraw [draw=white, fill=blue] (0,1.85) rectangle (0.6,2.15);
			
			\node at (-0.4,7) {$1$};
			\node at (-0.4,6) {$2$};
			\node at (-0.4,5) {$3$};
			\node at (-0.4,4) {$4$};
			\node at (-0.4,3) {$5$};
			\node at (-0.4,2) {$6$};
			
			\node [rednode] (x01) at (2,7){}; 
			\node [greennode] (x02) at (2,5){};
			\node [violetnode] (x03) at (2,4){};
			\node [orangenode] (x04) at (2,3){};
			
			\node [rednode] (x11) at (3.5,7){}; 
			\node [cyannode] (x12) at (3.5,6){};
			\node [greennode] (x13) at (3.5,5){};
			\node [bluenode] (x14) at (3.5,2){};
			
			\node [rednode] (x21) at (5,7){};
			\node [violetnode] (x22) at (5,4){};
			\node [orangenode] (x23) at (5,3){};
			\node [bluenode] (x24) at (5,2){};
			
			\node [greennode] (x31) at (6.5,5){};
			\node [violetnode] (x32) at (6.5,4){};
			\node [bluenode] (x33) at (6.5,2){};
			
			\node [rednode] (x41) at (8,7){};
			\node [cyannode] (x42) at (8,6){};
			\node [orangenode] (x43) at (8,3){};
			
			\node [rednode] (x51) at (9.5,7){};
			\node [cyannode] (x52) at (9.5,6){};
			\node [orangenode] (x53) at (9.5,3){};
			\node [bluenode] (x54) at (9.5,2){};
			
			\draw (1.75,9)--(v0)--(2.25,9);
			\node at (1.75,9.2) {$2$};
			\node at (2.25,9.2) {$6$};
			
			\draw (x01)--(x11)--(x21);
			\draw (x01)--(x11)--(x21);
			\draw (x02)--(x13);
			\draw [line width=1.2pt](x13)--(x31)--(x54);
			\draw [line width=1.2pt] (x03)--(x23)--(x42);
			\draw (x42)--(x52);
			\draw [line width=1.2pt] (x04)--(x22);
			\draw (x22)--(x32);
			\draw (x41)--(x51);
			\draw (x14)--(x24)--(x33);
			\draw (x43)--(x53);
			
			\draw (3.25,9)--(v1)--(3.75,9);
			\node at (3.25,9.2) {$4$};
			\node at (3.75,9.2) {$5$};
			
			\draw (5,9)--(v2);
			\node at (5,9.2) {$3$};
			
			\draw (6.25,9)--(v3)--(6.75,9);
			\node at (6.25,9.2) {$2$};
			\node at (6.75,9.2) {$5$};
			
			\draw (7.75,9)--(v4)--(8.25,9);
			\node at (7.75,9.2) {$3$};
			\node at (8.25,9.2) {$6$};
			
			\draw (9.25,9)--(v5)--(9.75,9);
			\node at (9.25,9.2) {$3$};
			\node at (9.75,9.2) {$4$};

			\node at (2,1.5) {$X_{i-2}$};
			\node at (3.5,1.5) {$X_{i-1}$};
			\node at (5,1.5) {$X_{i}$};
			\node at (6.5,1.5) {$X_{i+1}$};
			\node at (8,1.5) {$X_{i+2}$};
			\node at (9.5,1.5) {$X_{i+3}$};
			
			\node at (1,8.25) {$v_0$};
			\node at (2,8.25) {$v_{i-2}$};
			\node at (3.5,8.25) {$v_{i-1}$};
			\node at (5,8.25) {$v_{i}$};
			\node at (6.5,8.25) {$v_{i+1}$};
			\node at (8,8.25) {$v_{i+2}$};
			\node at (9.5,8.25) {$v_{i+3}$};
			\node at (10.5,8.25) {$v_{k-1}$};
			
			\node at (1.2,4.5) {$\cdots$};
			\node at (10.3,4.5) {$\cdots$};

			\node at (10.3,6) {$H$};
			\node at (0.7,9) {$C$};
		\end{tikzpicture}
		\caption{Illustration of $H$ with $L_C(v_{i-2})=\{1,3,4,5\}$, $L_C(v_{i-1})=\{1,2,3,6\}$, $L_C(v_{i})=\{3,4,5,6\}$, $L_C(v_{i+1})=\{3,4,6\}$, $L_C(v_{i+2})=\{1,2,5\}$, and $L_C(v_{i+3})=\{1,2,5,6\}$.
        The edges in $E_0$ are ignored here, thin lines are the short edges and thick lines represent the long edges.}
		\label{fig-H}
	\end{figure}
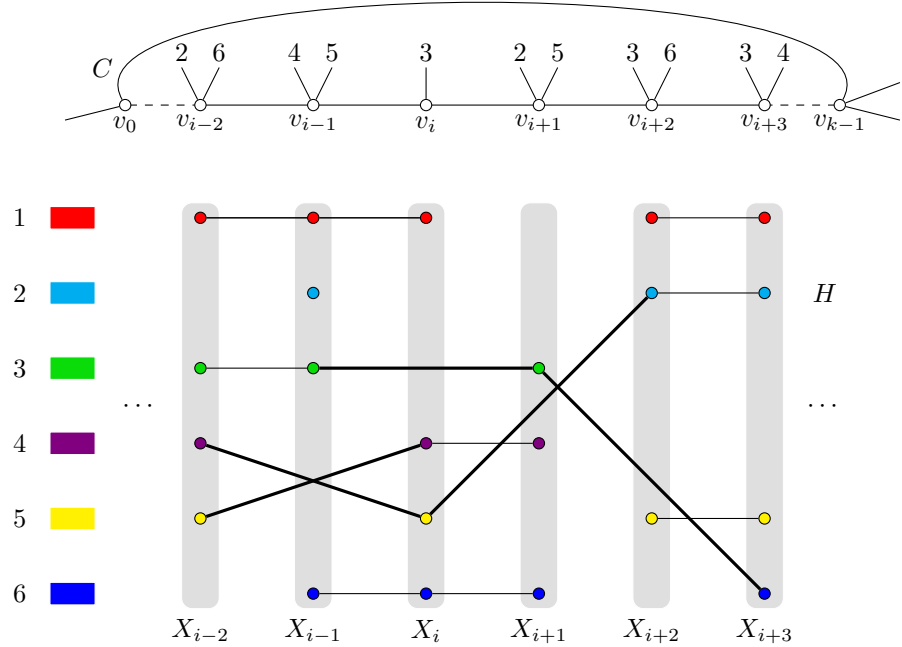 
	
The following observation follows immediately from the definition of $H$.
	
\begin{observation}\label{obs-longedges}
	For each $i\in[k]$, there are at most two long edges between $X_{i-1}$ and $X_{i+1}$.
	Furthermore, if there are two long edges between $X_{i-1}$ and $X_{i+1}$, then they must be $(v_{i-1},\alpha)(v_{i+1},\beta)$ and $(v_{i-1},\beta)(v_{i+1},\alpha)$ for some distinct $\alpha$ and $\beta$ from $L_C(v_{i-1})\cap L_C(v_{i+1})$.
\end{observation}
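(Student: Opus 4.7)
The plan is to deduce the statement directly from Observation~\ref{obs-block two}. By the definition of $E_2$, every long edge between $X_{i-1}$ and $X_{i+1}$ has the form $(v_{i-1},\alpha)(v_{i+1},\beta)$, where $(\alpha,\beta) \in L_C(v_{i-1}) \times L_C(v_{i+1})$ is a color pair blocked by $v_i$. Thus the long edges in this part of $H$ are in bijection with the color pairs that $v_i$ blocks. Since Observation~\ref{obs-block two} asserts that any vertex blocks at most two color pairs, the first claim follows immediately, with no case analysis needed.

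For the ``furthermore'' statement, suppose there are exactly two long edges between $X_{i-1}$ and $X_{i+1}$. Then $v_i$ blocks two distinct color pairs, which by the second half of Observation~\ref{obs-block two} must be of the form $(\alpha_1,\beta_1)$ and $(\alpha_2,\beta_2)$ with $\alpha_1 = \beta_2 \ne \alpha_2 = \beta_1$. Setting $\alpha := \alpha_1$ and $\beta := \alpha_2$, the two blocked pairs become $(\alpha,\beta)$ and $(\beta,\alpha)$ with $\alpha \ne \beta$, so the two long edges are precisely $(v_{i-1},\alpha)(v_{i+1},\beta)$ and $(v_{i-1},\beta)(v_{i+1},\alpha)$. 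The membership $\alpha, \beta \in L_C(v_{i-1}) \cap L_C(v_{i+1})$ is then forced because each of $\alpha$ and $\beta$ appears both as a first coordinate (hence lies in $L_C(v_{i-1})$) and as a second coordinate (hence lies in $L_C(v_{i+1})$). There is no genuine obstacle here: the observation is essentially a bookkeeping reformulation of Observation~\ref{obs-block two} in the language of the auxiliary graph $H$.
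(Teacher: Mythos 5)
Your proof is correct and takes essentially the same route as the paper, which simply states that the observation ``follows immediately from the definition of $H$'': your bijection between the long edges joining $X_{i-1}$ and $X_{i+1}$ and the pairs in $L_C(v_{i-1})\times L_C(v_{i+1})$ blocked by $v_i$, combined with Observation~\ref{obs-block two}, is exactly the intended argument.
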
 
	
\begin{claim}\label{claim-size}
	$H$ has no independent set of size $|V(C)|=k$.
\end{claim}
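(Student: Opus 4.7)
The plan is to interpret a hypothetical independent set of size $k$ in $H$ as a proper conflict-free extension of $\phi$ to $G$, contradicting the minimality of $G$. Everything outside $C$ has already been dealt with by the coloring $\phi$ of $G''$ and the choices of the colors $c_u$, so the entire question reduces to properly $L_C$-coloring the cycle $C$ in such a way that, for each $v_i\in V(C)$, at least one color appears exactly once at its neighborhood in $G$.

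First, I observe that the edges of $E_0$ make each $X_i$ a clique in $H$. Hence any independent set of size $k$ must contain exactly one vertex from each $X_i$, so it has the form $I=\{(v_i,\alpha_i):i\in[k]\}$ with $\alpha_i\in L_C(v_i)$. Setting $\phi(v_i):=\alpha_i$ and using that $\alpha_i\in L_C(v_i)\subseteq L(v_i)\setminus\phi(N_{G'}(v_i))$ already yields a proper $L$-coloring of $G$: independence across the short edges in $E_1$ forces $\alpha_i\neq\alpha_{i+1}$ for every $i$, and the remaining properness conditions follow immediately from the definition of $L_C$.

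Next, I verify the conflict-free condition. For a vertex $u\in V(G)\setminus V(C)$, this was already established in the paragraph preceding the claim via the color $c_u$ (or via the color just assigned to its unique $C$-neighbor when $\mathcal{U}_\phi(u,G')=\emptyset$). For a vertex $v_{i+1}\in V(C)$, independence of $I$ across the long edges in $E_2$ means that $v_{i+1}$ does not block the pair $(\alpha_i,\alpha_{i+2})$. Unpacking the definition of ``block'': when $d_G(v_{i+1})=4$ this rules out $\{\phi(u_{i+1}),\phi(w_{i+1})\}=\{\alpha_i,\alpha_{i+2}\}$ with $\alpha_i\neq\alpha_{i+2}$, while when $d_G(v_{i+1})=3$ it rules out $\phi(u_{i+1})=\alpha_i=\alpha_{i+2}$. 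A short case check of the multiset of colors appearing in $N_G(v_{i+1})$, using in the degree-$4$ case that $\phi(u_{i+1})\neq\phi(w_{i+1})$ (since $u_{i+1}w_{i+1}\in E(G'')$ and $\phi$ is proper on $G''$), shows that in every situation some color appears exactly once at $N_G(v_{i+1})$.

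There is no real obstacle beyond a careful bookkeeping check that the $E_0,E_1,E_2$ constraints correspond precisely to ``one color per $X_i$'', ``proper on $C$'', and ``uniquely appearing color at each $v_{i+1}$''; once this is verified, the existence of an independent set of size $k$ in $H$ produces a proper conflict-free $L$-coloring of $G$, contradicting the choice of $G$ as a minimum counterexample.
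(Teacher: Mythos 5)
Your proposal is correct and follows essentially the same approach as the paper: an independent set of size $k$ picks one color from each $X_i$ (since $E_0$ makes each $X_i$ a clique), the $E_1$-independence and the definition of $L_C$ give properness, and the $E_2$-independence (no blocked pairs) together with the $c_u$ choices give the conflict-free condition, contradicting minimality. The paper states the properness and conflict-free verifications as ``clearly''; your case analysis merely spells out the same details.
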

\begin{proof}
	Suppose $I$ is an independent set of $H$ with $|I|=k$. We extend $\phi$ to $G$ so that for each $(v,\alpha)\in I$, $\phi(v)=\alpha$. As $|I|=k$ and $I$ contains at most one vertex of $X_i$ for each $i\in [k]$, all the vertices of $C$ are colored in this manner. Clearly, $\phi$ is proper and $\mathcal{U}_{\phi}(u,G) \neq \emptyset$ for each $u \in  V(C)$. By the definition of $L_C$, $\mathcal{U}_{\phi}(u,G) \neq \emptyset$ for each $u \in  V(G)\setminus V(C)$, which implies that $\phi$ is a proper conflict-free $L$-coloring of $G$, a contradiction.
\end{proof}
	
\begin{claim}\label{claim-oneFoneB}
	For every $i\in [k]$ and every $\alpha \in L_C(v_i)$, there are at most one forward edge and at most one backward edge from $(v_i,\alpha)$.
\end{claim}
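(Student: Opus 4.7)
The plan is to unpack the three edge types of $H$ restricted to the forward direction from $(v_i,\alpha)$, and to show that each potential forward edge excludes the others. A forward edge from $(v_i,\alpha)$ is either the unique short edge $(v_i,\alpha)(v_{i+1},\alpha)$, which exists exactly when $\alpha\in L_C(v_{i+1})$, or a long edge $(v_i,\alpha)(v_{i+2},\beta)$ witnessing that $v_{i+1}$ blocks $(\alpha,\beta)$. So I must rule out (a) the existence of two long forward edges, and (b) the simultaneous existence of a short forward edge and a long forward edge.

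For (a), if $v_{i+1}$ blocked two distinct pairs $(\alpha,\beta_1)$ and $(\alpha,\beta_2)$ with the same first coordinate and $\beta_1\ne\beta_2$, Observation~\ref{obs-block two} would force $\alpha=\beta_2$ and $\alpha=\beta_1$, hence $\beta_1=\beta_2$, a contradiction. (This is exactly the specialisation of Observation~\ref{obs-longedges} to a single vertex of $X_i$.) So at most one long forward edge leaves $(v_i,\alpha)$.

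For (b), the long forward edge $(v_i,\alpha)(v_{i+2},\beta)$ means $v_{i+1}$ blocks $(\alpha,\beta)$; unpacking the definition of blocking gives $\phi(u_{i+1})=\alpha$ if $d_G(v_{i+1})=3$, and $\alpha\in\phi(\{u_{i+1},w_{i+1}\})$ if $d_G(v_{i+1})=4$. In either case, $\alpha\in\phi(N_{G'}(v_{i+1}))$. Since $L_C(v_{i+1})=L(v_{i+1})\setminus(\{c_u\mid u\in N_{G'}(v_{i+1})\}\cup\phi(N_{G'}(v_{i+1})))$, it follows that $\alpha\notin L_C(v_{i+1})$, so the short forward edge $(v_i,\alpha)(v_{i+1},\alpha)$ does not exist. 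Combining (a) and (b) yields at most one forward edge at $(v_i,\alpha)$, and the symmetric argument with $(i-1,i-2)$ in place of $(i+1,i+2)$ handles backward edges.

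This argument is essentially bookkeeping, so there is no real obstacle; the one delicate point is that $L_C(v_{i+1})$ was defined to exclude the whole set $\phi(N_{G'}(v_{i+1}))$, and it is precisely this choice (rather than excluding, say, only a single distinguished colour from each neighbour) that makes step (b) go through cleanly.
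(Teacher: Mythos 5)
Your proof is correct and follows essentially the same route as the paper's: the key point in both is that a long forward edge forces $\alpha\in\phi(N_{G'}(v_{i+1}))$, hence $\alpha\notin L_C(v_{i+1})$, which excludes the short forward edge. You are in fact slightly more complete than the paper, which leaves the exclusion of two long forward edges (your step (a), via Observation~\ref{obs-block two}) implicit.
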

	
\begin{proof}
  Suppose that there is a forward short edge from $(v_i,\alpha)$, thus $\alpha\in L_C(v_{i+1})$.
	Then $\alpha \notin \phi(N_{G}(v_{i+1}))$, and hence there is no long forward edges incident with $(v_i,\alpha)$ by definition. So there is at most one forward edge from  $(v_i,\alpha)$. By symmetry, the statement holds for backward edges.
\end{proof}
	
An \emph{extendable structure} $(U,I)$ of $H$ is a pair of a set $U\subseteq X_i\cup X_{i+t}$ for some $i\in [k]$, $t\in \{1,2\}$ and an independent set $I$ of $H$ satisfying the followings:
\begin{enumerate}[label={(C\arabic*)},leftmargin=1.3cm]
	\item \label{C1} $|U \cap X_i| \geq 3$ and  $|U \cap X_{i+t}|=2$.
	\item \label{C2} No backward long edges or no backward short edges incident with vertices from $U\cap X_{i+t}$.
	\item \label{C3} If $t=1$, then $I$ consists of one vertex in $X_{i+2}$ and one in $X_{i+3}$. And if $t=2$, then $I$ consists of a vertex in each of $X_{i+1}$, $X_{i+3}$ and $X_{i+4}$.
	\item \label{C4} No backward edges from vertices in $I$ to $U\cup X_{i-1}$.
\end{enumerate}
Note that an extendable structure is a local structure contained in $\bigcup_{i\leq j\leq i+4} X_j$ for some $i\in [k]$, and we know $k\geq 5$ by Claim \ref{cl:4-cycle}.
If $H$ contains an extendable structure, then it can be extended to an independent set of $H$ with $k$ vertices as in the proof of next claim.
	
\begin{claim}\label{cl:extendablestructure}
	$H$ contains no extendable structure.
\end{claim}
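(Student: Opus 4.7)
The plan is to derive a contradiction with Claim \ref{claim-size} by upgrading any putative extendable structure $(U,I)$ to an independent set of $H$ of size $k$. By cyclic relabeling I may assume $i=0$, so that $U\subseteq X_0\cup X_t$ for some $t\in\{1,2\}$, and $I$ already occupies one vertex of $X_j$ for each $j\in\{2,3\}$ when $t=1$ and each $j\in\{1,3,4\}$ when $t=2$. The construction proceeds in four steps: keep $I$; greedily pick $x_j\in X_j$ for $j=t+3, t+4,\ldots,k-1$ in ascending order; pick $x_0\in U\cap X_0$; and finally pick $x_t\in U\cap X_t$.

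The first three steps are routine. In the forward greedy, processing $X_j$ is obstructed only by the unique forward edge of $x_{j-1}$ and the unique forward long edge of $x_{j-2}$, both singletons by Claim \ref{claim-oneFoneB}, so at most two vertices of $X_j$ are forbidden and $|X_j|\geq 3$ leaves an admissible choice. Picking $x_0$ is similar: the only edges into $X_0$ from already chosen vertices come from the forward short edge of $x_{k-1}$ and the forward long edge of $x_{k-2}$, forbidding at most two vertices of $X_0$; condition (C4) further guarantees that no backward edge from $I$ lands inside $U\cap X_0$; and since $|U\cap X_0|\geq 3$ by (C1), a valid $x_0\in U\cap X_0$ exists.

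The main obstacle is the last step. Here $|U\cap X_t|=2$ while two distinct vertices of $U\cap X_t$ might be forbidden a priori: one by the forward edge from $x_0$ into $X_t$, and one by a forward edge from $x_{k-1}$ (when $t=1$) or from $y_1\in I\cap X_1$ (when $t=2$) into $X_t$. All other potential backward edges from $I$ into $U\cap X_t$ are excluded by (C4), so nothing else interferes, but the remaining count is exactly ``two forbidden in a set of size two''. The hypothesis (C2) breaks the tie: either every vertex of $U\cap X_t$ has no backward long edge, in which case the long-edge conflict entirely misses $U\cap X_t$, or every vertex of $U\cap X_t$ has no backward short edge, in which case the short-edge conflict entirely misses $U\cap X_t$. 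In either subcase at most one vertex of $U\cap X_t$ is forbidden, so a valid $x_t$ survives, producing an independent set of $H$ of size $k$ and contradicting Claim \ref{claim-size}.
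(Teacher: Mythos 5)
Your proof is correct and follows essentially the same route as the paper: greedily extend $I$ through $X_{t+3},\dots,X_{k-1}$ and $U\cap X_0$ using Claim~\ref{claim-oneFoneB} together with $|X_j|\geq 3$ and \ref{C1}/\ref{C4}, and then use \ref{C2} to rescue one of the two vertices of $U\cap X_t$, contradicting Claim~\ref{claim-size}. Your write-up is in fact more explicit than the paper's about why only the two backward conflicts into $U\cap X_t$ survive and how \ref{C2} eliminates one of them.
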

	
\begin{proof} 
	Suppose to the contrary, $(U,I)$ is an extendable structure of $H$.
	Without loss of generality, we may assume that $U\subseteq X_0\cup X_t$ for some $t\in \{1,2\}$. 
  Since every vertex of $H$ has at most one forward edge by Claim \ref{claim-oneFoneB} and $|L_C(v_i)|\geq 3$ for each $i\in [k]$,
  we can greedily add vertices to $I$ from each of $X_{t+3}, X_{t+4}, \ldots, X_{k-1}, X_0$ to obtain an independent set $J^-$ of size $k-1$.
  Observe that $J^- \cap X_t=\emptyset$.  By condition \ref{C2}, at least one of two vertices in $X_{t} \cap U$ has no neighbors in $I \cap (X_{t-1} \cup X_{t-2})$, so we add one such vertex to $J^-$ to have a set $J$ of vertices. By \ref{C4}, $J$ is still an independent set  of $H$ and $|J|=k$, a contradiction to Claim \ref{claim-size}.
\end{proof}

The following observation is frequently used in the the sequel.	
\begin{observation}\label{obs-next two}
	Suppose $\alpha,\beta \in L_C(v_i)$ for some $i \in [k]$ and $\alpha \neq \beta$. Then there exists a color $\gamma \in L_C(v_{i+1})\setminus \{\alpha,\beta\}$  and a color $\tau \in L_C(v_{i+2})\setminus\{\gamma\}$ such that $(v_{i+2},\tau)$ is adjacent to neither $(v_i,\alpha)$ nor $(v_i,\beta)$. (Possibility $\tau \in \{\alpha, \beta\}$ is not excluded.)
\end{observation}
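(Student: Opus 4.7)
The plan is to interpret the conclusion directly via $H$: since every edge between $X_i$ and $X_{i+2}$ is a long edge from $E_2$, the requirement that $(v_{i+2},\tau)$ be adjacent to neither $(v_i,\alpha)$ nor $(v_i,\beta)$ amounts to neither $(\alpha,\tau)$ nor $(\beta,\tau)$ being blocked by $v_{i+1}$. A candidate $\gamma\in L_C(v_{i+1})\setminus\{\alpha,\beta\}$ exists for free because $|L_C(v_{i+1})|\geq 3$, so the content of the argument is choosing $\gamma$ and $\tau$ compatibly with the extra constraint $\tau\neq\gamma$.

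Let $F$ be the set of colors $\tau$ for which $(\alpha,\tau)$ or $(\beta,\tau)$ is blocked by $v_{i+1}$. By Observation \ref{obs-block two}, $v_{i+1}$ blocks at most two pairs, and if it blocks two they have the form $(a,b),(b,a)$ for some distinct $a,b$. A quick case analysis on how $\{\alpha,\beta\}$ meets those coordinates yields $|F|\leq 2$, and shows that $|F|=2$ can occur only when $d_G(v_{i+1})=4$ and $\{\alpha,\beta\}=\phi(\{u_{i+1},w_{i+1}\})$, in which case $F=\{\alpha,\beta\}$. When $|F|\leq 1$, any $\gamma\in L_C(v_{i+1})\setminus\{\alpha,\beta\}$ works, because $L_C(v_{i+2})\setminus(F\cup\{\gamma\})$ then has size at least $|L_C(v_{i+2})|-2\geq 1$ and provides $\tau$.

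The delicate case is $|F|=2$, where I need $\tau\in L_C(v_{i+2})\setminus\{\alpha,\beta,\gamma\}$. Writing $S=L_C(v_{i+2})\setminus\{\alpha,\beta\}$, we have $|S|\geq |L_C(v_{i+2})|-2\geq 1$; if $|S|\geq 2$ any $\gamma$ suffices, while the only obstruction is $|S|=1$ with $\gamma$ equal to the unique element of $S$. The rescue is that in this case $\{\alpha,\beta\}=\phi(\{u_{i+1},w_{i+1}\})$, so both $\alpha$ and $\beta$ are stripped from $L_C(v_{i+1})$ by its very definition; hence $L_C(v_{i+1})\setminus\{\alpha,\beta\}=L_C(v_{i+1})$ still has at least three elements, and I can pick $\gamma$ there avoiding the unique element of $S$ and then take $\tau$ to be that element. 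Spotting this hidden slack in the choice of $\gamma$ is the only non-routine step of the argument.
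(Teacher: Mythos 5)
Your proof is correct. It takes a mildly different route from the paper's: the paper splits on whether one of $(v_i,\alpha)$, $(v_i,\beta)$ has a forward long edge and, in that case, invokes Claim~\ref{claim-oneFoneB} to conclude $\alpha\notin L_C(v_{i+1})$, which leaves room to choose $\gamma$ after $\tau$ has been fixed; you instead classify the forbidden second coordinates $F$ directly via Observation~\ref{obs-block two}, and in the only tight case $|F|=2$ you read off $\{\alpha,\beta\}=\phi(\{u_{i+1},w_{i+1}\})$, hence $\alpha,\beta\notin L_C(v_{i+1})$, straight from the definition of $L_C$. The two arguments ultimately exploit the same slack (that $L_C(v_{i+1})$ excludes the colors in $\phi(N_{G'}(v_{i+1}))$), but yours is self-contained in that it does not rely on Claim~\ref{claim-oneFoneB}, at the cost of a slightly longer case analysis; the paper's version is shorter because that claim has already done part of the work. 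One minor remark: your set $F$ may contain colors outside $L_C(v_{i+2})$, so avoiding $F\cup\{\gamma\}$ is sufficient but occasionally over-cautious, which does no harm since $|L_C(v_{i+2})|\geq 3$.
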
	
\begin{proof}
  Suppose that one of $(v_i,\alpha)$ and $(v_i,\beta)$, say $(v_i,\alpha)$, has a neighbor in $X_{i+2}$. As $|L_C(v_{i+2})|\geq 3$, there exists a color $\tau\in L_C(v_{i+2})$ such that $(v_{i+2},\tau)$ is adjacent to neither $(v_i,\alpha)$ nor $(v_i,\beta)$. By Claim \ref{claim-oneFoneB}, we know $\alpha\notin L_C(v_{i+1})$, so we can choose a color $\gamma$ from $L_C(v_{i+1})\setminus \{\alpha,\beta,\tau\}$, as desired.       
  Hence we may assume that both $(v_i,\alpha)$ and $(v_i,\beta)$ have no neighbors in $X_{i+2}$, then we can arbitrarily choose a color $\gamma \in L_C(v_{i+1})\setminus\{\alpha, \beta\}$ and a color $\tau \in L_C(v_{i+2}) \setminus \{\gamma\}$ as desired.
\end{proof}
 
\begin{claim}\label{claim-2-regualr}
	Every vertex of $H$ is incident with exactly one forward edge and one backward edge.
\end{claim}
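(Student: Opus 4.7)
By Claim~\ref{claim-oneFoneB} every vertex of $H$ has at most one forward and at most one backward edge, so I only need to show that each vertex has at least one of each. Reversing the cyclic orientation of $C$ turns forward edges into backward edges, so it suffices to treat the forward case. Assume toward contradiction that some $(v_i,\alpha)$ has no forward edge; relabel so that $i=0$. Unpacking the definitions of $E_1$ and $E_2$, this means $\alpha\notin L_C(v_1)$ and $v_1$ blocks no pair $(\alpha,\beta)$ with $\beta\in L_C(v_2)$. My goal is to manufacture an extendable structure $(U,I)$ in $H$, contradicting Claim~\ref{cl:extendablestructure} (or, failing that, to directly build an independent set of size $k$, contradicting Claim~\ref{claim-size}).

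For the $U$-part I take $U\cap X_0=X_0$; condition (C1) on the $X_0$-side holds because $|L_C(v_0)|\geq 3$. For the second block I first attempt $t=1$. By Observation~\ref{obs-block two}, $v_0$ blocks at most two pairs, so at most two colors $\beta\in L_C(v_1)$ participate in a backward long edge from $X_1$; if $|L_C(v_1)|\geq 4$, two such colors survive and I use them as $U\cap X_1$, satisfying (C2) via the long-edge option. If $|L_C(v_1)|=3$ then $d_G(v_1)=4$, and I instead try to pick two colors of $L_C(v_1)\setminus L_C(v_0)$ to suppress backward short edges. When neither selection works, the configuration around $v_0v_1$ is extremely constrained and I switch to $t=2$, using the no-long-forward property of $(v_0,\alpha)$ together with Observation~\ref{obs-next two} to find two colors in $L_C(v_2)$ meeting (C2), or fall back to a direct greedy construction of a $k$-element independent set containing $(v_0,\alpha)$.

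With $U$ fixed, I build $I$ by greedy selection across $X_2,X_3$ (for $t=1$) or $X_1,X_3,X_4$ (for $t=2$). At each newly chosen vertex only one backward short and one backward long edge to previously chosen vertices must be avoided by Claim~\ref{claim-oneFoneB}, and since every $L_C(v_j)$ has size at least $3$, Observation~\ref{obs-next two} makes the greedy step succeed. The main obstacle is condition (C4): it demands that vertices in $I$ have \emph{no} backward edges to $U\cup X_{k-1}$, not merely that they avoid the specific colors already chosen. Because $U\cap X_0=X_0$ this threatens to rule out every candidate in $X_2$. The crucial slack comes precisely from the standing hypothesis that $(v_0,\alpha)$ has no forward edge, together with the girth bound $k\geq 5$ from Claims~\ref{cl:triangle} and~\ref{cl:4-cycle}, which limits how many blocked pairs can cluster near $v_0$. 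Verifying that this slack is sufficient for (C4) is the technical heart of the argument; once it is, Claim~\ref{cl:extendablestructure} supplies the contradiction.
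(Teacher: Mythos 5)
Your proposal correctly reduces the claim, via Claim~\ref{claim-oneFoneB} and the forward/backward symmetry, to showing that a vertex $(v_0,\alpha)$ with no forward edge leads to a contradiction, and it correctly identifies extendable structures (Claim~\ref{cl:extendablestructure}) as the tool. But what follows is a plan rather than a proof, and the plan is oriented the wrong way around, which is exactly why the step you defer as ``the technical heart'' does not go through. The paper places the hypothesised edge-free vertex $(v_0,\alpha)$ into the independent part $I$ of the extendable structure and builds $U$ from the fibers \emph{on the other side} of it: condition \ref{C4} asks that vertices of $I$ have no backward edges into $U\cup X_{i-1}$, and a vertex with no backward edge at all satisfies this for free --- that is the entire role of the hypothesis. (Concretely: either some $(v_1,\beta)$, $\beta\neq\alpha$, has no neighbor in $X_{k-1}$, in which case a pigeonhole argument in $X_{k-1}$ yields $U=X_{k-2}\cup\{(v_{k-1},1),(v_{k-1},2)\}$, $I=\{(v_0,\alpha),(v_1,\beta)\}$ with $t=1$; or Observation~\ref{obs-longedges} forces $L_C(v_1)=\{\alpha,1,2\}$ with both long edges between $X_{k-1}$ and $X_1$ present, and then $U=X_{k-1}\cup\{(v_1,1),(v_1,2)\}$, $I=\{(v_0,\alpha),(v_2,\gamma),(v_3,\tau)\}$ with $t=2$ works via Observation~\ref{obs-next two}.)

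You instead put the whole fiber $X_0$ into $U$ and try to build $I$ ahead of it in $X_2,X_3$ (or $X_1,X_3,X_4$). As you yourself observe, \ref{C4} then requires the chosen vertex of $X_2$ to have \emph{no} backward long edge into $X_0$ at all. Nothing in the hypothesis ``$(v_0,\alpha)$ has no forward edge'' controls this: the long edges between $X_0$ and $X_2$ are determined by which pairs $v_1$ blocks against the \emph{other} colors of $L_C(v_0)$, about which you have no information. Nor does the girth bound $k\geq 5$ limit blocked pairs --- blocking depends only on the colors $\phi(u_i),\phi(w_i)$, not on the presence of short cycles --- so the ``crucial slack'' you appeal to is not present in your configuration. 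Likewise, the fallback of ``a direct greedy construction of a $k$-element independent set'' is not available, since closing a greedy chain around the cycle is precisely the difficulty that the extendable-structure machinery was built to handle. The gap is therefore real: the decisive case analysis and the verification of \ref{C2} and \ref{C4} are missing, and the configuration you set up cannot supply them.
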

	
\begin{proof}
	Suppose to the contrary, $(v_0,\alpha)$ has no backward edge for some $\alpha \in L_C(v_0)$. Argument for forward edge is symmetric.
		
	First assume that there exists a color $\beta \in L_C(v_1)$ such that $\beta \neq \alpha$ and $(v_1,\beta)$ has no neighbors in $X_{k-1}$. As $|L_C(v_{k-1})| \geq 3$, by Pigeonhole Principle, there exists two colors, say $1,2 \in L_C(v_{k-1})$, such that backward long edges or backward short edges are missed from $(v_{k-1},1), (v_{k-1}, 2)\in X_{k-1}$. Then $U=X_{k-2}\cup\{(v_{k-1},1), (v_{k-1},2)\}$ and $I=\{(v_0,\alpha), (v_1,\beta)\}$ form an extendable structure with $t=1$: The conditions \ref{C1} and \ref{C3} are obvious. The condition \ref{C2} follows from the choice of $(v_{k-1},1)$ and $(v_{k-1},2)$, and the condition \ref{C4} is satisfied since $(v_0, \alpha)$ has no backward edge and $(v_1,\beta)$ has no backward edge to $X_{k-1}$, a contradiction.
		
	Thus by the definition of $H$, without loss of generality, we deduce that there exist two distinct colors, say $1,2$, such that $L_C(v_1)=\{\alpha,1,2\}$, $\phi(N_{G'}(v_{1}))=\{1,2\}$ and $\{1,2\} \subsetneq L_C(v_{k-1})$. By Observation \ref{obs-next two}, there exists a color $\gamma \in L_C(v_2)\setminus \{1,2\}$ and $\tau \in L_C(v_3) \setminus \{\gamma\}$ such that $(v_3,\tau)$ has no neighbors in $\{(v_1,1),(v_1,2)\}$. Then $U=X_{k-1}\cup \{(v_1,1), (v_1,2)\}$ and $I=\{(v_0,\alpha), (v_2,\gamma), (v_3,\tau)\}$ form an extendable structure with $t=2$: The conditions \ref{C1}, \ref{C2} and \ref{C3} are obvious. The condition  \ref{C4} is satisfied since $(v_0, \alpha)$ has no backward edge, and both $(v_2,\gamma)$ and $(v_3,\tau)$ have no backward edge to $\{(v_1,1),(v_1,2)\}$, a contradiction.
\end{proof} 
	
\begin{claim}\label{claim-at most one short}
	For every $i\in [k]$, there is at most one forward short edge incident with vertices of $X_i$.
\end{claim}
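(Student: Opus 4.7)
The plan is by contradiction. Suppose WLOG that $X_0$ carries two forward short edges $(v_0,\alpha)(v_1,\alpha)$ and $(v_0,\beta)(v_1,\beta)$, so $\alpha,\beta\in L_C(v_0)\cap L_C(v_1)$ with $\alpha\neq\beta$. I aim to derive a contradiction by exhibiting either an extendable structure (contradicting Claim \ref{cl:extendablestructure}) or an independent set of size $k$ in $H$ (contradicting Claim \ref{claim-size}). The first attempt uses the extendable structure with $t=1$, $U\cap X_1=\{(v_1,\alpha),(v_1,\beta)\}$, and $U\cap X_0$ equal to three vertices of $X_0$ to be chosen later. By Claim \ref{claim-oneFoneB}, both vertices of $U\cap X_1$ have a backward short edge and hence no backward long edge, so (C2) holds. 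Applying Observation \ref{obs-next two} with $i=1$ to the colors $\alpha,\beta\in L_C(v_1)$ yields $\gamma\in L_C(v_2)\setminus\{\alpha,\beta\}$ and $\tau\in L_C(v_3)\setminus\{\gamma\}$ such that $(v_3,\tau)$ is adjacent to neither $(v_1,\alpha)$ nor $(v_1,\beta)$, and we set $I=\{(v_2,\gamma),(v_3,\tau)\}$. Conditions (C1), (C3), and the part of (C4) concerning $(v_3,\tau)$ hold immediately.

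The key step is verifying (C4) for $(v_2,\gamma)$: its unique backward edge (Claim \ref{claim-2-regualr}) must miss $U$. Since $\gamma\neq\alpha,\beta$, any backward short edge lands on $(v_1,\gamma)\notin U$; if moreover $\gamma\in L_C(v_1)$ then Claim \ref{claim-oneFoneB} rules out a backward long edge and any three-vertex $U\cap X_0$ finishes the structure. Otherwise $(v_2,\gamma)$ has a backward long edge to some $(v_0,\eta)$, and Claim \ref{claim-oneFoneB} applied at $(v_0,\eta)$ forces $\eta\notin L_C(v_1)$, so $\eta\neq\alpha,\beta$. Whenever $|L_C(v_0)|\geq 4$, I pick $U\cap X_0\subseteq X_0\setminus\{(v_0,\eta)\}$ of size $3$, completing the extendable structure.

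The stubborn sub-case is $|L_C(v_0)|=3$ together with $L_C(v_1)\cap L_C(v_2)\setminus\{\alpha,\beta\}=\emptyset$. Then $L_C(v_0)=\{\alpha,\beta,\delta\}$, and every $\gamma'\in L_C(v_2)\setminus\{\alpha,\beta\}$ routes its backward long edge to the same vertex $(v_0,\delta)$; since $(v_0,\delta)$ has at most one forward edge by Claim \ref{claim-oneFoneB}, we conclude $|L_C(v_2)\setminus\{\alpha,\beta\}|\leq 1$, forcing $L_C(v_2)=\{\alpha,\beta,\delta_2\}$ and $\{\alpha,\beta\}\subseteq L_C(v_2)$. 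Hence two forward short edges also lie between $X_1$ and $X_2$, so the same dichotomy applies at the next step. Iterating around the cycle either produces an extendable structure at some step (contradiction), or ends with the fully rigid configuration $L_C(v_j)=\{\alpha,\beta,\delta_j\}$ of size $3$ for every $j\in[k]$, whose long edges in $H$ are exactly the $(v_j,\delta_j)(v_{j+2},\delta_{j+2})$ (and whose short edges between consecutive $X_j,X_{j+1}$ run through $\alpha,\beta$, and possibly through a shared $\delta$).

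In this rigid configuration I directly construct an independent set of size $k$ in $H$: when $k$ is even take $c_j=\alpha$ for even $j$ and $c_j=\beta$ for odd $j$; when $k$ is odd use the same alternation for $0\leq j\leq k-2$ and set $c_{k-1}=\delta_{k-1}$. Consecutive values differ (so no short edge is hit) and at most one coordinate satisfies $c_j=\delta_j$ (so no long edge is hit), hence $\{(v_j,c_j):j\in[k]\}$ is independent in $H$, contradicting Claim \ref{claim-size}. The main obstacle is cleanly justifying the propagation of the rigid pattern around the cycle and then producing the explicit independent set; the remaining verifications of the four extendable-structure conditions are routine.
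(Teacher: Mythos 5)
Your overall strategy is the same as the paper's: set $U\cap X_1=\{(v_1,\alpha),(v_1,\beta)\}$, try to complete an extendable structure with $t=1$ via Observation~\ref{obs-next two}, show that the only way this can fail is a rigid configuration that propagates around $C$, and then kill that configuration with an explicit independent set of size $k$. Your concluding independent set (alternating $\alpha,\beta$, with one $\delta_{k-1}$ when $k$ is odd) is correct, since in the rigid configuration every $(v_j,\alpha)$ and $(v_j,\beta)$ carries a short edge in each direction and hence no long edges, so all long edges join $\delta$-vertices.

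There is, however, a genuine gap in your case analysis. What you actually verify is: (a) if the color $\gamma$ \emph{produced by Observation~\ref{obs-next two}} lies in $L_C(v_1)$, you are done; (b) if $\gamma\notin L_C(v_1)$ and $|L_C(v_0)|\geq 4$, you are done. But you then declare the remaining ``stubborn'' case to be $|L_C(v_0)|=3$ \emph{and} $L_C(v_1)\cap L_C(v_2)\setminus\{\alpha,\beta\}=\emptyset$. The case where $|L_C(v_0)|=3$, the particular $\gamma$ from the observation is not in $L_C(v_1)$, yet some other $\gamma'\in L_C(v_1)\cap L_C(v_2)\setminus\{\alpha,\beta\}$ exists, is not covered: your rigidity derivation needs \emph{every} color of $L_C(v_2)\setminus\{\alpha,\beta\}$ to route a backward long edge to $(v_0,\delta)$, which fails for $\gamma'$, while Observation~\ref{obs-next two} does not let you simply substitute $\gamma'$ for $\gamma$ because it supplies $\tau$ only for its own $\gamma$. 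The hole is patchable: the set $T$ of colors $t$ with $(v_3,t)$ adjacent to $(v_1,\alpha)$ or $(v_1,\beta)$ has size at most $2$; if $|L_C(v_3)\setminus T|\geq 2$ you can pair any $\gamma'$ with some $t\in L_C(v_3)\setminus(T\cup\{\gamma'\})$, and if $|L_C(v_3)\setminus T|=1$ then both $(v_1,\alpha),(v_1,\beta)$ carry forward long edges, whence $\alpha,\beta\notin L_C(v_2)$ and (since at most one color of $L_C(v_2)$ can send a backward long edge to the single vertex $(v_0,\delta)$) $|L_C(v_1)\cap L_C(v_2)|\geq |L_C(v_2)|-1\geq 2$, so $\gamma'$ can be chosen distinct from the unique admissible $t$. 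The paper avoids this issue by conditioning instead on the forward edge of the third vertex of $U\cap X_0$, which makes condition \ref{C4} automatic in the first branch; you should either adopt that pivot or add the counting argument above.
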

\begin{proof}
	Without loss of generality assume $\{1,2,3\}\subseteq L_C(v_0)$, both $(v_0,1)$ and $(v_0,2)$ incident with forward short edges.
	Let $U=\{(v_0,1), (v_0,2), (v_0,3), (v_1,1), (v_1,2)\}$.
	If $(v_0,3)$ is incident with forward short edge, then by Observation \ref{obs-next two}, there exist colors $\gamma\in L_C(v_2)\setminus\{1,2\}$ and $\tau\in L_C(v_3)\setminus\{\gamma\}$ such that neither $(v_1,1)$ nor $(v_1,2)$ is adjacent to $(v_3,\tau)$.
	So $U$ and $I=\{(v_2,\gamma), (v_3,\tau)\}$ form an extendable structure with $t=1$, a contradiction.
	Hence $(v_0,3)$ is adjacent to $(v_2,\alpha)$ for some $\alpha\in L_C(v_2)\setminus\{1,2\}$.
		
		
	First assume that there exists a color $\beta \in L_C(v_2) \setminus \{1,2,\alpha\}$. Clearly, $(v_2,\beta)$ has no neighbors in $\{(v_0,1),(v_0,2),(v_0,3)\}$. If there exists a color $\gamma \in L_C(v_3) \setminus \{\beta\}$ such that $(v_3,\gamma)$ has no neighbors in $\{(v_1,1),(v_1,2)\}$, then $U$ and $I=\{(v_2,\beta), (v_3,\gamma)\}$ form an extendable structure with $t=1$, a contradiction. 
  Thus we deduce that $L_C(v_3)=\{1,2,\beta\}$ and $\phi(N_{G'}(v_2))=\{1,2\}$, so $1,2\notin L_C(v_2)$. As there is a color $\beta'\in L_C(v_2)\setminus\{1,2,\alpha,\beta\}$, $U$ and $I=\{(v_2,\beta'), (v_3,\beta)\}$ form an extendable structure with $t=1$, a contradiction.
		
	So we have $L_C(v_2)=\{1,2,\alpha\}$.
	Using the same argument repeatedly, we conclude that $\{1,2\}\subseteq L_C(v_i)$ for every $i\in [k]$.
	Then $I=\{(v_{2m+1},1)\mid 0\leq m\leq \left\lfloor (k-1)/2 \right\rfloor\}\cup \{(v_{2m},2)\mid 0\leq m\leq \left\lfloor (k-1)/2 \right\rfloor\} \cup \{(v_0,3)\}$ is an independent set of $H$ with $|I|=k$, a contradiction.
\end{proof}
	
If $|X_i| \geq 4$ for some $i \in [k]$, then by Claims~\ref{claim-2-regualr} and \ref{claim-at most one short}, there are at least three long edges between $X_{i}$ and $X_{i+2}$, a contradiction to Observation \ref{obs-longedges}. Thus $|X_i|=3$ for each $i \in [k]$, two of which incident with forward long edges and the rest one incident with a forward short edge.
	
If all the short edges form a matching of $H$, then we can easily find an independent set $I$ of $H$ with $|I|=k$ by choosing vertices incident with forward short edges, a contradiction to Claim \ref{claim-size}. Therefore, without loss of generality, there must exist a color, say $\alpha$, which appears in $L_C(v_0) \cap L_C(v_1) \cap L_C(v_2)$. 
Then there are two distinct colors in $L_C(v_0)\setminus\{\alpha\}$, say 1 and 2, and there are two distinct colors in $L_C(v_1)\setminus\{\alpha,1,2\}$, say 3 and 4 such that all of $\{(v_0,1),(v_0,2),(v_1,3),(v_1,4)\}$ are incident with forward long edges. By Observation \ref{obs-longedges} and the fact $|L_C(v_i)|=3$ for every $i\in [k]$, we know that $L_C(v_0)=L_C(v_2)=\{\alpha,1,2\}$ and $L_C(v_1)=\{\alpha,3,4\}$, $L_C(v_3)=\{\beta, 3,4\}$ for some color $\beta\notin \{3,4\}$.
If $\beta\neq \alpha$, then 
$U=X_0 \cup \{(v_1,3),(v_1,4)\}$ and $I=\{(v_2,\alpha),(v_3,\beta)\}$ form an extendable structure with $t=1$: the conditions \ref{C1}, \ref{C2} and \ref{C3} are obvious by the choice of vertices, and the condition \ref{C4} is satisfied since $(v_2,\alpha)$ has a backward short edge to $(v_1,\alpha)\notin U$ and the long edges between $X_1$ and $X_3$ are $(v_1,3)(v_3,4)$ and $(v_1,4)(v_3,3)$, which are not incident to $(v_3,\beta)$, a contradiction.
    
Thus $L_C(v_3)=\{\alpha, 3,4\}$. Using the same argument repeatedly, we conclude that $k$ is even, moreover, $L_C(v_j)=\{\alpha,1,2\}$ if $j$ is even, and $L_C(v_j)=\{\alpha,3,4\}$ if $j$ is odd. Then $I=\{(v_j,1)\mid  j \in [k] \text{ and } j \text{ is even}\} \cup \{(v_j,3)\mid  j \in [k] \text{ and } j \text{ is odd}\}$ is an independent set of $H$ with $|I|=k$, a contradiction. This completes the proof of Theorem \ref{thm:main}.

\section*{Acknowledgement}
This work is based on the discussion at Zhejiang Normal University, China. 
We appreciate Xuding Zhu for inviting the first and the second authors there.
M. Kashima has been supported by Keio University SPRING scholarship JPMJSP2123 and JST ERATO JPMJER2301.
R. \v{S}krekovski has been partially supported by the Slovenian Research Agency and Innovation (ARIS) program P1-0383, project J1-3002, and the annual work program of Rudolfovo. 
R. Xu has been supported by National Science Foundation for Young
Scientists of China, grant number: 12401472.

\end{document}